\newtheorem{thm}{Theorem}[section]
\newtheorem{lemma}[thm]{Lemma}
\newtheorem{cor}[thm]{Corollary}
\newtheorem{conjecture}[thm]{Conjecture}
\newtheorem{prop}[thm]{Proposition}
\newtheorem{obs}[thm]{Observation}
\newtheorem{definition}[thm]{Definition}
\theoremstyle{remark}
\newtheorem{remark}{Remark}
\tiny\color{gray},
\definecolor{dkgreen}{rgb}{0,0.6,0}
\definecolor{gray}{rgb}{0.5,0.5,0.5}
\definecolor{mauve}{rgb}{0.58,0,0.82}
\title{Products of reflections in smooth Bruhat intervals}
\date{\today}
\author{Christian Gaetz}
\thanks{C.G. was partially supported by an NSF Mathematical Sciences Postdoctoral Research Fellowship under grant no. DMS-2103121}
\address{Department of Mathematics, Harvard University, Cambridge, MA 02138.}
\email{\href{mailto:gaetz@math.harvard.edu}{gaetz@math.harvard.edu}.}
\author{Ram K. Goel}
\address{Portland, OR.}
\email{\href{mailto:ram.krishna.goel@gmail.com}{ram.krishna.goel@gmail.com}.}
\begin{document}
\maketitle

\begin{abstract}
A permutation is called smooth if the corresponding Schubert variety is smooth. Gilboa and Lapid prove that in the symmetric group, multiplying the reflections below a smooth element $w$ in Bruhat order in a \emph{compatible order} yields back the element $w$. We strengthen this result by showing that such a product in fact determines a saturated chain $e \to w$ in Bruhat order, and that this property characterizes smooth elements. 
\end{abstract}

\section{Introduction}

\label{Intro Section}
A permutation $w$ in the symmetric group $S_n$ is called \emph{smooth} if the corresponding Schubert variety $X_w$ is smooth. This class of permutations is very well-studied, and there are several famous criteria for smoothness in terms of Kazhdan--Lusztig polynomials, Poincar\'{e} polynomials of Schubert varieties, and Bruhat graphs \cite{Carrell} and permutation pattern avoidance \cite{pattern-avoidance}. Recently, there has been renewed interest \cite{self-dual, Gilboa_2021, Lapid-tightness} in the structure of smooth permutations and their relation to Bruhat order. We continue this study by refining a result from \cite{Gilboa_2021} and providing a new criterion for smoothness.

For an element $w \in S_n$, let $\mathbf{C}_{\mathcal{T}}(w)$ be the set of reflections in $S_n$ which lie in the Bruhat interval $[e,w]$. When $w$ is smooth, Gilboa and Lapid \cite{Gilboa_2021} defined a set of total orders on $\mathbf{C}_{\mathcal{T}}(w)$ called \emph{compatible orders} (see Definition~\ref{def:compatible-order}); they proved:

\begin{thm}[Theorem 1.2 of \cite{Gilboa_2021}]
\label{Gilboa_Main_Thm}
Let $w \in S_n$ be smooth, then a compatible order on $\mathbf{C}_{\mathcal{T}}(w)$ exists, and for all compatible orders $t_1\prec t_2\prec \cdots \prec t_k$ we have $t_1t_2\cdots t_k = w$. 
\end{thm}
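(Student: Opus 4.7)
The plan is to argue by induction on $\ell(w)$, with the trivial base case $w = e$ (empty product equals the identity). The key general input throughout would be the Carrell--Peterson smoothness criterion: $w$ is smooth if and only if $|\mathbf{C}_{\mathcal{T}}(w)| = \ell(w)$. In particular $k = \ell(w)$, so if the theorem holds then $t_1 \cdots t_k$ is a product of exactly $\ell(w)$ reflections equaling $w$, forcing each partial product $t_1 \cdots t_j$ to be a length-$j$ element and producing a saturated chain $e \lessdot t_1 \lessdot t_1t_2 \lessdot \cdots \lessdot w$ along the way.

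For the inductive step, I would show that the top reflection $t_k$ of a compatible order peels off cleanly, in the sense that (a) $wt_k \lessdot w$ is a Bruhat cover; (b) $wt_k$ is again smooth; and (c) the truncation $t_1 \prec \cdots \prec t_{k-1}$ is a compatible order on $\mathbf{C}_{\mathcal{T}}(wt_k)$. Granting (a)--(c), the inductive hypothesis applied to $wt_k$ gives $t_1 \cdots t_{k-1} = wt_k$, and multiplying on the right by $t_k$ delivers $t_1 \cdots t_k = w$. The set-theoretic half of (c) is essentially free: one always has $\mathbf{C}_{\mathcal{T}}(wt_k) \subseteq \mathbf{C}_{\mathcal{T}}(w) \setminus \{t_k\}$, and once (b) is known Carrell--Peterson forces $|\mathbf{C}_{\mathcal{T}}(wt_k)| = \ell(w) - 1 = k - 1$, so equality must hold.

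The main obstacle is verifying (a)--(c) directly from Definition~\ref{def:compatible-order}. Property (a) should follow from the definition being engineered so that the maximal element $t_k$ of a compatible order records a ``descent-type'' reflection at the top of $w$. Property (b) --- preservation of smoothness under passage from $w$ to $wt_k$ --- is where I expect the real work to live; the natural approaches are either the pattern-avoidance characterization of smoothness (avoidance of $3412$ and $4231$) together with an analysis of which patterns could newly appear in $wt_k$, or a direct verification that exactly one reflection is lost on passing from $\mathbf{C}_{\mathcal{T}}(w)$ to $\mathbf{C}_{\mathcal{T}}(wt_k)$, which combined with (a) immediately gives $|\mathbf{C}_{\mathcal{T}}(wt_k)| = \ell(wt_k)$. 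The stability statement in (c), that the compatible-order axioms persist under restriction to $[e, wt_k]$, should then fall out of the same case analysis that establishes (a) and (b), since a well-chosen definition will phrase compatibility in terms of local combinatorial data on the Bruhat graph that is inherited by subintervals.
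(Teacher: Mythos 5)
Your overall strategy --- inducting on $\ell(w)$ by peeling off the top reflection $t_k$ of the compatible order --- is genuinely different from the route taken in \cite{Gilboa_2021} (and followed in this paper for the stronger saturated-chain statement), which peels off an entire fan $T_{i,j}\succ T_{i,j-1}\succ\cdots\succ T_{i,i+1}$ attached to a wedge $T_{i,j}$ of the admissible set all at once, and then transfers the conclusion to arbitrary compatible orders via the connectivity of the graph of compatible orders under elementary moves (Lemma~\ref{graph_connected}). Unfortunately, your reduction breaks at the step you describe as ``essentially free'': the containment $\mathbf{C}_{\mathcal{T}}(wt_k)\subseteq \mathbf{C}_{\mathcal{T}}(w)\setminus\{t_k\}$ is false in general, because a reflection $t$ for which $wt$ is covered by $w$ need not satisfy $t\not\le wt$. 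Already for $w=321\in S_3$ with the compatible order $T_{1,2}\prec T_{1,3}\prec T_{2,3}$ one has $t_k=T_{2,3}$, $wt_k=312$, and
\[
\mathbf{C}_{\mathcal{T}}(312)=\{T_{1,2},T_{2,3}\}\neq\{T_{1,2},T_{1,3}\}=\mathbf{C}_{\mathcal{T}}(w)\setminus\{t_k\};
\]
indeed no $v$ covered by $321$ can have $T_{1,3}\in\mathbf{C}_{\mathcal{T}}(v)$, since $T_{1,3}=321$ itself. So your claim (c) fails: the truncation $t_1\prec\cdots\prec t_{k-1}$ is not an ordering of $\mathbf{C}_{\mathcal{T}}(wt_k)$ at all, the inductive hypothesis cannot be applied to it, and the induction does not close. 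The reversed compatible order for $321$ exhibits the same failure, so the problem cannot be repaired by peeling from the other end.

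Beyond this structural obstruction, the parts of the argument you correctly identify as carrying the real content --- the cover property (a), preservation of smoothness (b), and persistence of the compatibility axioms (c) --- are only asserted with heuristic justification, so even where the strategy is salvageable the proposal is a plan rather than a proof. If you want an induction that shrinks the data one layer at a time, the right object to induct on is the admissible set rather than the permutation: the wedge construction replaces $A=\mathbf{C}(w)$ by the smaller admissible set $A^\circ$, whose reflections multiply (in the order $\prec^\circ$) to an element $w'$ that in general differs from $w$ by $j-i>1$ reflections, not by a single one. Studying Lemma~\ref{Compatible_order_construction} together with Lemma~\ref{graph_connected} shows how the two difficulties --- establishing the identity for one well-chosen compatible order, and transferring it to all of them --- are separated in the actual argument.
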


Our main theorem refines this result by showing that these factorizations determine saturated chains in the Bruhat order and that smooth permutations are characterized by this stronger property.

\begin{thm}
\label{thm:intro-main}
A permutation $w \in S_n$ is smooth if and only if there exists an ordering $t_1\prec t_2\prec \cdots \prec t_k$ of $\mathbf{C}_{\mathcal{T}}(w)$ such that 
\[
e\to t_1 \to t_1t_2 \to \cdots \to t_1\cdots t_k = w
\]
is a saturated chain in Bruhat order. Furthermore, if $w$ is smooth, any compatible order satisfies this property.
\end{thm}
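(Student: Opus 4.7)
Suppose an ordering of the required type exists. The hypothesized saturated chain from $e$ to $w$ has $k = |\mathbf{C}_{\mathcal{T}}(w)|$ steps, so $\ell(w) = |\mathbf{C}_{\mathcal{T}}(w)|$. This numerical equality is the classical Carrell--Peterson criterion for rational smoothness \cite{Carrell}, which in type A coincides with smoothness; hence $w$ is smooth.

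\textbf{Forward direction.} Let $w$ be smooth and fix a compatible order $t_1 \prec \cdots \prec t_k$. Theorem~\ref{Gilboa_Main_Thm} gives $t_1 t_2 \cdots t_k = w$, and Carrell--Peterson gives $k = \ell(w)$. Setting $u_i := t_1 \cdots t_i$, my task is to show that $\ell(u_i) = i$ for every $i$. I would argue by induction on $i$, strengthening the inductive hypothesis to three simultaneous claims: $u_i \le w$ in Bruhat order, $u_i$ is smooth of length $i$, and the restriction of $\prec$ to $\{t_1,\ldots,t_i\}$ is a compatible order on $\mathbf{C}_{\mathcal{T}}(u_i)$. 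The base $i=0$ is trivial. For the step, the recursive nature of Definition~\ref{def:compatible-order} should let me identify $t_i$ as a ``next'' reflection relative to $u_{i-1}$, in such a way that $u_i = u_{i-1} t_i$ covers $u_{i-1}$ and itself satisfies all three maintained properties.

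\textbf{Main obstacle.} The crux of the induction is showing that the length actually increments by $+1$, rather than by some larger odd number or a length decrease. The following reduction helps: each $d_i := \ell(u_i) - \ell(u_{i-1})$ is a nonzero odd integer and $\sum_i d_i = \ell(w) = k$, so if every $d_i \ge 1$ then necessarily every $d_i = 1$. Thus the heart of the argument is to exclude length-decreasing steps. I would try to do this by combining the fact that $t_1,\ldots,t_k$ exhaust $\mathbf{C}_{\mathcal{T}}(w)$ (of total size exactly $\ell(w)$) with the palindromic/self-dual structure of $[e,w]$ below smooth elements (cf.~\cite{self-dual,Lapid-tightness}): a length-decrease at step $i$ would force the partial-product path to later re-traverse a cover already accounted for, contradicting the recursive defining conditions of a compatible order. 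Making this argument precise, particularly identifying the correct sub-interval in which the inductive hypothesis descends, is where I expect the real work to lie.
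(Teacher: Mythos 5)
Your reverse direction is correct and is exactly the paper's argument: a saturated chain of length $k=|\mathbf{C}_{\mathcal{T}}(w)|$ forces $\ell(w)=|\mathbf{C}_{\mathcal{T}}(w)|$, which is smoothness by (\ref{eq:smooth-in-terms-of-length}). The forward direction, however, has a genuine gap, and in fact your strengthened inductive hypothesis is false. Take $w=321\in S_3$ with the compatible order $T_{1,2}\prec T_{1,3}\prec T_{2,3}$. Then $u_2=T_{1,2}T_{1,3}$ is a $3$-cycle of length $2$, while $t_2=T_{1,3}$ is the permutation $321$ of length $3$; so $t_2\not\le u_2$, and $\{t_1,t_2\}\neq\mathbf{C}_{\mathcal{T}}(u_2)$ (which contains $T_{2,3}$, not $T_{1,3}$). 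Thus the invariant ``$\prec$ restricted to $\{t_1,\dots,t_i\}$ is a compatible order on $\mathbf{C}_{\mathcal{T}}(u_i)$'' fails already at the first nontrivial instance, and the induction cannot descend to the sub-interval you hope for. Moreover, the step you flag as the ``main obstacle'' --- excluding length-decreasing steps --- is the entire content of the theorem; the parity observation ($d_i$ odd, $\sum d_i=k$) is a valid reduction but does not make $d_i\ge 1$ any easier, and the appeal to self-duality of $[e,w]$ is not an argument.

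The paper's route is structurally different: it inducts not on the prefix length but on the wedge decomposition of Gilboa--Lapid, where the compatible order is built by appending the block $T_{i,j}\succ T_{i,j-1}\succ\cdots\succ T_{i,i+1}$ for a wedge $T_{i,j}$ to a compatible order on the smaller admissible set $A^{\circ}$. Pattern avoidance ($3412$, $4231$) forces $w(i)>w(i+1)>\cdots>w(j)$ (Lemma~\ref{lem:wedge-implies-decreasing}), which pins down the one-line behavior of the partial products (Lemma~\ref{w'_ordering}) precisely enough to verify each cover directly via Lemma~\ref{Bjorner_bruhat_covering}. Finally, your proposal does not address the ``any compatible order'' clause at all: the paper handles this separately by showing the saturated-chain property is preserved under the elementary operations relating compatible orders (Propositions~\ref{operation_1_preserves_chain} and~\ref{operation_2_preserves_chain}, using Dyer's reflection-subgroup results) together with connectivity of the graph of compatible orders (Lemma~\ref{graph_connected}). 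Both of these ingredients would need to be supplied for a complete proof.
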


Theorem~\ref{thm:intro-main} is proven in Section~\ref{Main_Theorem_Section}.

\begin{remark}
Just the existence of an ordering $t_1\prec t_2\prec \cdots \prec t_k$ of $\mathbf{C}_{\mathcal{T}}(w)$ such that $t_1\cdots t_k=w$, without the saturated Bruhat chain condition from Theorem~\ref{thm:intro-main}, does not guarantee smoothness. For example, the permutation $w=35142$ is not smooth, but 
\[
\mathbf{C}_{\mathcal{T}}(w)=\{T_{1,2},T_{1,3},T_{2,3},T_{2,4},T_{3,4},T_{2,5},T_{4,5},T_{3,5}\},
\]
and 
\[
w=T_{1,2}T_{1,3}T_{2,3}T_{2,4}T_{3,4}T_{2,5}T_{4,5}T_{3,5},
\]
where $T_{i,j}$ denotes the reflection swapping $i,j$.
\end{remark}

\section*{Acknowledgements}

We wish to thank Pavel Etingof, Slava Gerovitch, and Tanya Khovanova, the organizers of the MIT PRIMES-USA program, during which this research was conducted. We are also grateful to Erez Lapid for helpful correspondence and to the anonymous referee for their careful corrections to an earlier version of this article. 

\section{Background}
\label{Background Section}

Much of this section reviews results and definitions from the work \cite{Gilboa_2021} of Gilboa and Lapid. We assume the reader is familiar with the (strong) Bruhat order $(S_n, \leq)$ on the symmetric group (see, for example \cite{Bjorner} for definitions and basic facts).

\subsection{Admissible sets and compatible orders}

Fix a positive integer $n\ge 1$. Let $\mathcal{T}=\{T_{i,j}:1\le i<j\le n\}$ be the set of reflections in $S_n$, and define $\mathcal{C}^{2,3} = \mathcal{T} \cup \{R_{i,j,k}, L_{i,j,k}:i<j<k\}$, where $R_{i,j,k}=T_{i,j}T_{j,k}$ and $L_{i,j,k}=T_{j,k}T_{i,j}$ are 3-cycles. Define $\mathbf{C}(w) = \{ \tau \in \mathcal{C}^{2,3}:\tau \le w\}$ and $\mathbf{C}_{\mathcal{T}}(w)=\{\tau\in \mathcal{T}:\tau\le w\}$. 

Define $\mu_w:[n]\to [n]$ for $w\in S_n$ as 
\[ \mu_w(i) = \max\{w(1),\ldots,w(i)\}.\]
It is useful to note the following identity:
\begin{equation} \label{eq:reflection-less-bruhat}
T_{i,j} \le w \iff \mu_w(i)\ge j \text{ and } \mu_{w^{-1}}(i) \ge j.
\end{equation}  

By \cite{Carrell}, a permutation $w\in S_n$ is smooth if and only if
\begin{equation}
\label{eq:smooth-in-terms-of-length}
  \ell(w)=|\mathbf{C}_{\mathcal{T}}(w)|,
\end{equation}
where $\ell(w)$ denotes the Coxeter length of $w$. A famous alternative characterization was given by Lakshmibai--Sandhya \cite{pattern-avoidance}: an element of $S_n$ is smooth if and only if it avoids the patterns $3412$ and $4231$.

\begin{definition}[Admissible sets, Definition 2.1 of \cite{Gilboa_2021}]
A subset $A\subseteq \mathcal{C}^{2,3}$ is called \emph{admissible} if:
\begin{itemize}
    \item $A$ is downward closed with respect to Bruhat order, 
    \item if $R_{i,j,l},L_{i,k,l}\in A$ with $i<j,k<l$, then $T_{i,l}\in A$, and
    \item whenever $T_{i,j},T_{j,k}\in A$ with $i<j<k$, at least one of $R_{i,j,k}$ and $L_{i,j,k}$ is in $A$. 
\end{itemize}
\end{definition}

By Lemma 2.2 of \cite{Gilboa_2021}, $\mathbf{C}(w)$ is admissible for smooth $w$.

\begin{definition}[Compatible orders, Section 1.2 of \cite{Gilboa_2021}]
\label{def:compatible-order}
A linear order $\prec$ on $A_{\mathcal{T}}=A\cap \mathcal{T}$ for an admissible set $A$ is called \emph{compatible} (with $A$) if the following condition holds: if $T_{i,j},T_{j,k}\in A$ for some $i<j<k$, then
\begin{itemize}
    \item if $T_{i,k}\in A$, then $T_{i,j}\prec T_{i,k}\prec T_{j,k}$ or $T_{j,k}\prec T_{i,k}\prec T_{i,j}$, and 
    \item if $T_{i,k}\not \in A$, then $R_{i,j,k}\in A$ if and only if $T_{i,j}\prec T_{j,k}$. 
\end{itemize}
\end{definition}

\subsection{Existence of compatible orders}

In this subsection, we outline Gilboa and Lapid's \cite{Gilboa_2021} proof that for any admissible subset $A\subseteq \mathcal{C}^{2,3}$, there exists a compatible order.

\begin{definition}[Definition 3.2 of \cite{Gilboa_2021}] 
\label{Wedge_definition}
Suppose that $A\subseteq \mathcal{C}^{2,3}$ is admissible. We say that $T_{i,j}$ is a \emph{wedge} for $A$ if: 
\begin{itemize}
    \item $T_{i,j}\in A$,
    \item $T_{i-1,i}\not \in A$, and
    \item $R_{i,j,j+1}\not \in A$. 
\end{itemize}
\end{definition}

Let $w\in S_n$ be smooth. Assume a wedge $T_{i,j}$ exists for $A=\mathbf{C}(w)$; in particular, this means that $T_{i,j}\in A$, so $T_{i,j} \le w$. The criterion (\ref{eq:reflection-less-bruhat}) implies $T_{i,r} \in A$ for all $i<r\le j$. Letting $A_{\mathcal{T}}=\mathbf{C}_{\mathcal{T}}(w)$, define 
\[ A^\circ = A \setminus (\{ T_{i,r}:r>i\} \cup \{ L_{i,r,l} ,R_{i,r,l}: l>r>i\}),\]
and let $A_{\mathcal{T}}^\circ = (A^\circ)\cap \mathcal{T}$, so that
\[ A_{\mathcal{T}}^\circ = A_{\mathcal{T}} \setminus \{T_{i,r} : i<r\le j\}.\]
By Lemma 3.10a of \cite{Gilboa_2021}, $A^\circ$ is admissible, and hence by induction a compatible order $\prec^\circ$ exists on $A_{\mathcal{T}}^\circ$. Then the compatible order for $A_{\mathcal{T}}$ is constructed by adding on $T_{i,j},T_{i,j-1},\ldots,T_{i,i+1}$ in that order to the end of the compatible order $\prec^\circ$ on $A_{\mathcal{T}}^\circ$. Lemma \ref{Compatible_order_construction} implies that the resulting order is also compatible:

\begin{lemma}[Lemma 4.5 of \cite{Gilboa_2021}]
\label{Compatible_order_construction}
Suppose that $\emptyset \not = A \subseteq \mathcal{C}^{2,3}$ is admissible and $T_{i,j}$ is a wedge for $A$. Then, any compatible order $\prec^\circ$ for $A^\circ$ may be extended to a compatible order $\prec$ on $A$ by requiring that
\[ T_{k,l} \prec T_{i,j}\prec T_{i,j-1}\prec \cdots \prec T_{i,i+1}\]
for each $T_{k,l}\in A_{\mathcal{T}}^\circ$. 
\end{lemma}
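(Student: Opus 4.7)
The plan is to verify directly that the total order $\prec$ defined in the statement satisfies the two conditions of Definition~\ref{def:compatible-order}, by case analysis on how any index triple $a<b<c$ with $T_{a,b}, T_{b,c}\in A_{\mathcal{T}}$ meets the appended block $N=\{T_{i,r}: i<r\le j\}$. Two structural observations organize the case split. First, the setup preceding the lemma gives that every reflection $T_{i,r}$ in $A_{\mathcal{T}}$ already lies in $N$, so membership in $N$ is equivalent to having first coordinate $i$. Second, the wedge condition $T_{i-1,i}\notin A$ together with criterion~(\ref{eq:reflection-less-bruhat}) and the monotonicity of $\mu_w, \mu_{w^{-1}}$ forces $T_{a,i}\notin A$ for every $a<i$. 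Consequently, whenever $T_{a,b}\in A$ with $a<b$ we must have $b\neq i$, so $T_{b,c}$ can never lie in $N$; only $T_{a,b}$ can.

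This leaves two cases. In Case~1, $a\neq i$, so both $T_{a,b}$ and $T_{b,c}$ lie in $A_{\mathcal{T}}^\circ$; moreover $T_{a,c}$ (when in $A$) and the 3-cycles $R_{a,b,c}, L_{a,b,c}$ all have first coordinate $a\neq i$, and hence each lies in $A$ if and only if it lies in $A^\circ$. Since the restriction of $\prec$ to $A_{\mathcal{T}}^\circ$ coincides with $\prec^\circ$, the compatibility condition for this triple transfers verbatim from $\prec^\circ$. In Case~2, $a=i$, so $T_{a,b}=T_{i,b}\in N$ with $b\le j$, and $T_{b,c}\in A_{\mathcal{T}}^\circ$; under $\prec$ we have $T_{b,c}\prec T_{i,b}$. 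If $c\le j$, then $T_{a,c}=T_{i,c}\in N$, and the chain $T_{b,c}\prec T_{i,c}\prec T_{i,b}$ follows from the placement of $N$ after $A_{\mathcal{T}}^\circ$ and the internal rule $T_{i,r}\prec T_{i,r'}\iff r>r'$ applied to $b<c$. If $c>j$, then $T_{i,c}\notin A$ by the first observation, and since $T_{i,b}\not\prec T_{b,c}$ the compatibility condition reduces to showing $R_{i,b,c}\notin A$.

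The main obstacle is this last subclaim. The third admissibility axiom forces at least one of $R_{i,b,c}, L_{i,b,c}$ into $A$; and if both lay in $A$, the second admissibility axiom (with indices $s=u=b$, $t=c$) would force $T_{i,c}\in A$, contradicting the hypothesis, so exactly one does. To show it must be the $L$-cycle, I would combine the wedge condition $R_{i,j,j+1}\notin A$ with Bruhat-descent on 3-cycles: the tableau criterion yields $R_{i,b,c-1}\le R_{i,b,c}$, so downward closure drops the third index until it equals $j+1$, and an admissibility-based argument (using $T_{i,r}\in A$ for $i<r\le j$ together with the axioms linking $T$, $R$, and $L$ configurations) then pushes the middle index up to $j$, ultimately producing the forbidden $R_{i,j,j+1}\in A$ from any assumed $R_{i,b,c}\in A$ with $b\le j<c$. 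Once this structural propagation is in hand, the two cases assemble into the full verification that $\prec$ is compatible with $A$.
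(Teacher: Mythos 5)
The paper does not actually prove this lemma --- it is imported verbatim as Lemma~4.5 of \cite{Gilboa_2021} --- so there is no in-paper argument to compare against; your proposal is a self-contained direct verification, and its structure is sound. The case analysis is complete: both of your structural observations follow at once from Observation~\ref{Observation_swapping_i} (the reflections of $A$ moving $i$ are exactly the appended block $N=\{T_{i,r}\}_{r=i+1}^{j}$, so in particular no $T_{a,i}$ with $a<i$ lies in $A$), Case~1 transfers correctly because none of $T_{a,b},T_{b,c},T_{a,c},R_{a,b,c},L_{a,b,c}$ is deleted in passing from $A$ to $A^{\circ}$ when $a\neq i$, and sub-case $c\le j$ is immediate from the internal ordering of $N$.

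The only place you should be careful is the final subclaim ($R_{i,b,c}\notin A$ when $b\le j<c$, $T_{i,b},T_{b,c}\in A$, $T_{i,c}\notin A$), which you describe in the conditional rather than carry out. The good news is that your plan does close up, and with the endpoint you predicted; here is the missing propagation. Suppose $R_{i,b,c}\in A$. Dropping the third index (one checks $R_{i,b,c'-1}\le R_{i,b,c'}$ for $c'-1>b$ via the dominance criterion) and using downward closure gives $R_{i,b,j+1}\in A$; if $b=j$ this already contradicts the wedge. If $b<j$, induct on the middle index: given $R_{i,b',j+1}\in A$ with $b'<j$, note $T_{i,b'+1}\in A$ (as $b'+1\le j$) and $T_{b'+1,j+1}\le T_{b,c}\in A$, so the third admissibility axiom puts one of $R_{i,b'+1,j+1}$, $L_{i,b'+1,j+1}$ in $A$; the $L$ option is impossible, since together with $R_{i,b',j+1}\in A$ the second axiom would force $T_{i,j+1}\in A$, contradicting Observation~\ref{Observation_swapping_i}. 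Hence $R_{i,b'+1,j+1}\in A$, and at $b'=j$ you reach $R_{i,j,j+1}\in A$, violating the wedge condition --- exactly the forbidden element you aimed for. Two small phrasing points: since the lemma is stated for an arbitrary admissible $A$ (not just $\mathbf{C}(w)$), derive $T_{a,i}\notin A$ from downward closure ($T_{i-1,i}\le T_{a,i}$) rather than from criterion~(\ref{eq:reflection-less-bruhat}) applied to a $w$, and note that your ``exactly one of $R,L$'' observation, while true, is not needed --- only $R_{i,b,c}\notin A$ is.
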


Inductively applying Lemma \ref{Compatible_order_construction} to any admissible set $A$ provides a construction for a compatible order for $A_{\mathcal{T}}$. 

\begin{cor}[\cite{Gilboa_2021}]
\label{Exists_compatible_order_for_C_w}
For all smooth $w\in S_n$,  $\mathbf{C}_{\mathcal{T}}(w)$ has a compatible order.
\end{cor}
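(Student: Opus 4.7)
The plan is to prove the corollary by strong induction on $|A|$ for $A$ an arbitrary nonempty admissible subset of $\mathcal{C}^{2,3}$, and then specialize to $A=\mathbf{C}(w)$, which is admissible for smooth $w$ by Lemma 2.2 of \cite{Gilboa_2021}. The base case is $A_{\mathcal{T}}=\emptyset$, corresponding to $w=e$, where the unique (empty) total order on $\emptyset$ is vacuously compatible. For the inductive step, I would produce a wedge $T_{i,j}$ for $A$, observe that the set $A^\circ$ obtained by stripping the reflections and $3$-cycles of the form $T_{i,r}$, $L_{i,r,l}$, $R_{i,r,l}$ with $r,l > i$ is admissible (Lemma 3.10a of \cite{Gilboa_2021}) and strictly smaller than $A$, apply the inductive hypothesis to obtain a compatible order $\prec^\circ$ on $A_{\mathcal{T}}^\circ$, and then invoke Lemma~\ref{Compatible_order_construction} to extend $\prec^\circ$ to a compatible order on all of $A_{\mathcal{T}}$ by appending $T_{i,j}\prec T_{i,j-1}\prec\cdots\prec T_{i,i+1}$ at the end.

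The heart of the induction is therefore the existence of a wedge for any nonempty admissible $A$. My approach here is extremal: choose $i$ to be the least integer such that some reflection of the form $T_{i,\ast}$ belongs to $A$. Minimality forces $T_{i-1,i}\notin A$ (vacuously when $i=1$, and otherwise because $T_{i-1,i}\in A$ would supply a reflection with smaller first index), handling the second wedge condition. For the first and third wedge conditions, I would then select $j$ so that $T_{i,j}\in A$ and $R_{i,j,j+1}\notin A$. The natural candidate is to take $j$ as large as possible with $T_{i,j}\in A$; one then argues that if $R_{i,j,j+1}$ were in $A$, a combination of downward closure and the second and third admissibility axioms (applied to the triangle on $\{i,j,j+1\}$) would force either $T_{i,j+1}\in A$, contradicting maximality of $j$, or produce a configuration that violates the third axiom outright.

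The main obstacle I anticipate is exactly this wedge-existence step: the extremal argument is clean at the level of the index $i$, but showing that $R_{i,j,j+1}\notin A$ for a good choice of $j$ requires careful bookkeeping of how the $R$ and $L$ generators interact with the admissibility axioms, and the correct extremal choice of $j$ may not be the naive maximal one. If the direct argument does not close up, the fallback is to exploit smoothness more heavily via the criterion \eqref{eq:reflection-less-bruhat}, which pins down $A_{\mathcal{T}}=\mathbf{C}_{\mathcal{T}}(w)$ in terms of the functions $\mu_w$ and $\mu_{w^{-1}}$ and lets one locate a wedge by inspecting a ``leftmost corner'' of the staircase shape cut out by these functions. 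Once a wedge is in hand, however, the rest of the argument is the mechanical induction described above, with Lemma~\ref{Compatible_order_construction} doing the work of gluing the orders together.
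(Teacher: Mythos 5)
Your overall induction scheme (strip a wedge, pass to $A^\circ$, recurse, and glue with Lemma~\ref{Compatible_order_construction}) is exactly the paper's outline, but the step you correctly identify as the heart of the matter --- that every nonempty admissible $A$ has a wedge --- is false, and your extremal argument fails on a small concrete example. Take $w=231\in S_3$, which is smooth, and $A=\mathbf{C}(w)=\{T_{1,2},T_{2,3},R_{1,2,3}\}$. This set is admissible (downward closed; the third axiom is satisfied because $R_{1,2,3}\in A$; the second axiom is vacuous because $L_{1,2,3}\notin A$), yet it has no wedge: $T_{2,3}$ fails because $T_{1,2}\in A$, and $T_{1,2}$ fails because $R_{1,2,3}\in A$. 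Your proposed contradiction from $R_{i,j,j+1}\in A$ does not materialize here: the second admissibility axiom needs \emph{both} an $R$- and an $L$-type $3$-cycle to conclude $T_{i,l}\in A$, and only $R_{1,2,3}$ is present, so maximality of $j$ is never contradicted and the third axiom is not violated. Your fallback via the staircase of $\mu_w$ and $\mu_{w^{-1}}$ cannot rescue this either, since Lemma~\ref{Lemma_3.8_Lapid} confirms directly that $\mathbf{C}(231)$ has no wedge at all.

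The missing idea is the inversion symmetry. The correct statement (Remark 3.4 of \cite{Gilboa_2021}) is only that at least one of $A$ or $A^{-1}$ has a wedge, and the recursion must be allowed to switch between $w$ and $w^{-1}$: if $A^{-1}=\mathbf{C}(w^{-1})$ is the one with a wedge, one builds a compatible order for $A^{-1}_{\mathcal{T}}$ and then uses the fact (Observation 4.3 of \cite{Gilboa_2021}) that reversing a compatible order for $A^{-1}$ yields a compatible order for $A$. In the example, $w^{-1}=312$ does admit the wedge $T_{1,2}$, and reversing the resulting order $T_{2,3}\prec T_{1,2}$ gives the compatible order $T_{1,2}\prec T_{2,3}$ for $\mathbf{C}_{\mathcal{T}}(231)$. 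Once this dichotomy and the reversal trick are incorporated, the rest of your argument (minimality of $i$ forcing $T_{i-1,i}\notin A$, admissibility of $A^\circ$, and the extension via Lemma~\ref{Compatible_order_construction}) goes through as in the paper.
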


Together with Theorem~\ref{Gilboa_Main_Thm}, this implies that for any smooth $w \in S_n$, there is an ordering $t_1 \prec \cdots \prec t_k$ of $\mathbf{C}_{\mathcal{T}}(w)$ such that $t_1 \cdots t_k=w$.

\subsection{More background facts} 

\begin{lemma}[Lemma 2.1.4 of \cite{Bjorner}]
\label{Bjorner_bruhat_covering}
Let $x,y\in S_n$. Then $x$ is covered by $y$ in Bruhat order if and only if $y=xT_{a,b}$ for some $a<b$ such that $x(a)<x(b)$ and there does not exist any $c$ for which $a<c<b$ and $x(a)<x(c)<x(b)$. 
\end{lemma}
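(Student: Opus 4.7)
The plan is to prove both directions by reducing to a direct inversion-count computation.

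First, I would invoke the standard consequence of the Strong Exchange Condition valid in any Coxeter group: $x$ is covered by $y$ in the Bruhat order on $S_n$ if and only if $y=xt$ for some reflection $t$ and $\ell(y)=\ell(x)+1$. Since every reflection in $S_n$ has the form $T_{a,b}$ for some $a<b$, this reduces the lemma to characterizing precisely when $\ell(xT_{a,b})=\ell(x)+1$ in terms of the values $x(a)$, $x(b)$, and $x(c)$ for $a<c<b$.

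Next, I would compute $\ell(y)-\ell(x)$ directly, where $y=xT_{a,b}$ differs from $x$ only by swapping the values at positions $a$ and $b$. Pairs of positions entirely outside $\{a,b\}$ contribute nothing. The pair $(a,b)$ itself contributes $+1$ to $\ell(y)-\ell(x)$ if $x(a)<x(b)$ and $-1$ otherwise. For any $c<a$, and symmetrically any $c>b$, the combined inversion count of the pairs $\{c,a\}$ and $\{c,b\}$ is unchanged, because the multiset $\{x(a),x(b)\}$ is compared against the fixed value $x(c)$ the same way in both $x$ and $y$. The nontrivial contribution comes from $a<c<b$: a short three-case breakdown on whether $x(c)$ lies below, between, or above $\{x(a),x(b)\}$ shows the combined contribution of pairs $\{a,c\}$ and $\{c,b\}$ equals $+2$ when $x(a)<x(c)<x(b)$, $-2$ when $x(b)<x(c)<x(a)$, and $0$ otherwise.

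Combining these contributions, $\ell(y)=\ell(x)+1$ holds exactly when $x(a)<x(b)$ and no $c$ with $a<c<b$ satisfies $x(a)<x(c)<x(b)$, yielding both directions of the lemma at once. I do not anticipate a significant obstacle here: the argument is essentially bookkeeping, with the only care needed being to verify the cancellation claim in the $c<a$ and $c>b$ regions, which follows cleanly from the observation that $\{y(a),y(b)\}=\{x(a),x(b)\}$ as multisets.
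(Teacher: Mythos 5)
Your argument is correct and is essentially the standard proof of this fact: the paper itself offers no proof, citing Lemma 2.1.4 of Bj\"orner--Brenti, and your reduction to $\ell(xT_{a,b})=\ell(x)+1$ followed by the case-by-case inversion count is exactly the textbook route. The bookkeeping checks out, including the cancellation for $c<a$ and $c>b$ and the $\pm 2$ contributions for $a<c<b$.
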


\begin{obs}[Observation 3.3 of \cite{Gilboa_2021}]
\label{Observation_swapping_i}
If $T_{i,j}$ is a wedge for the admissible set $A\subseteq \mathcal{C}^{2,3}$, then $\{T\in A_{\mathcal{T}} : T(i)\not = i\} = \{T_{i,r}\}_{r=i+1}^j$. 
\end{obs}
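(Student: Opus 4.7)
The plan is to prove the two set inclusions separately, with each direction using one of the three defining properties of a wedge. First I would show $\{T_{i,r}\}_{r=i+1}^j$ is contained in $\{T\in A_{\mathcal{T}}:T(i)\neq i\}$: the wedge hypothesis gives $T_{i,j}\in A$, and applying the criterion (\ref{eq:reflection-less-bruhat}) yields $T_{i,r}\leq T_{i,j}$ for each $i<r\leq j$ (since $\mu_{T_{i,j}}(i)=j$). Downward closure of $A$ then places all $T_{i,r}$ in $A_{\mathcal{T}}$, and each visibly moves $i$.

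For the reverse inclusion I would rule out the two other families of reflections that move $i$, namely $T_{a,i}$ with $a<i$ and $T_{i,b}$ with $b>j$. For the first family, (\ref{eq:reflection-less-bruhat}) gives $T_{i-1,i}\leq T_{a,i}$ (using $T_{a,i}(a)=i$), so if some $T_{a,i}\in A$ then downward closure would force $T_{i-1,i}\in A$, contradicting the second wedge property; the boundary case $i=1$ is vacuous. For the second family, (\ref{eq:reflection-less-bruhat}) analogously gives $T_{i,j+1}\leq T_{i,b}$, so downward closure reduces the problem to showing $T_{i,j+1}\notin A$. The plan here is to prove $R_{i,j,j+1}\leq T_{i,j+1}$ in Bruhat order; then downward closure would force $R_{i,j,j+1}\in A$, contradicting the third wedge property (the boundary case $j=n$ is vacuous).

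The one place requiring genuine computation, and thus the main obstacle, is the Bruhat relation $R_{i,j,j+1}\leq T_{i,j+1}$. I would attack it via Lemma~\ref{Bjorner_bruhat_covering}: a direct calculation gives $R_{i,j,j+1}\cdot T_{i,j}=T_{i,j+1}$, one checks $R_{i,j,j+1}(i)=j<j+1=R_{i,j,j+1}(j)$, and observes that no integer $c$ with $i<c<j$ can satisfy $j<R_{i,j,j+1}(c)<j+1$ because $R_{i,j,j+1}$ fixes such $c$ and no integer lies strictly between $j$ and $j+1$. This shows $T_{i,j+1}$ in fact covers $R_{i,j,j+1}$, after which the rest of the argument is routine bookkeeping with (\ref{eq:reflection-less-bruhat}) and downward closure.
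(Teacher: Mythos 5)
Your argument is correct, but note that the paper does not actually prove Observation~\ref{Observation_swapping_i}; it imports it verbatim from Gilboa--Lapid (their Observation 3.3), so there is no in-paper proof to compare against. Your self-contained proof is sound and uses only tools already present in the paper: the forward inclusion via (\ref{eq:reflection-less-bruhat}) applied to $w=T_{i,j}$ is exactly the remark the paper makes just after Definition~\ref{Wedge_definition}, and the reverse inclusion correctly pairs the two remaining wedge conditions with the two families of reflections moving $i$ ($T_{a,i}$ with $a<i$ against $T_{i-1,i}\notin A$, and $T_{i,b}$ with $b>j$ against $R_{i,j,j+1}\notin A$). The one nontrivial step, $R_{i,j,j+1}\le T_{i,j+1}$, checks out: with the paper's conventions $R_{i,j,j+1}$ sends $i\mapsto j$, $j\mapsto j+1$, $j+1\mapsto i$, so $R_{i,j,j+1}T_{i,j}=T_{i,j+1}$ and Lemma~\ref{Bjorner_bruhat_covering} applies with $(a,b)=(i,j)$, the interpolation condition being vacuous since no integer lies strictly between $j$ and $j+1$. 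The only implicit point worth making explicit is that downward closure of $A$ is closure within $\mathcal{C}^{2,3}$, which is what lets you conclude $R_{i,j,j+1}\in A$ from $T_{i,j+1}\in A$; this is how the paper uses admissibility as well.
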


\begin{lemma}[Lemma 3.8a of \cite{Gilboa_2021}]
\label{Lemma_3.8_Lapid}
Let $w\in S_n$ be such that $\mathbf{C}(w)$ is admissible, and let $1\le i<j\le n$. Then $T_{i,j}$ is a wedge for $\mathbf{C}(w)$ if and only if $w([i-1])=[i-1]$ and $w(i)\ge j = w^{-1}(i)$. 
\end{lemma}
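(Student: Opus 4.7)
The plan is to translate each of the three conditions in Definition~\ref{Wedge_definition} defining ``$T_{i,j}$ is a wedge for $A=\mathbf{C}(w)$'' into a condition on the one-line notation of $w$. The first two translations will use the criterion \eqref{eq:reflection-less-bruhat}; the third, involving the $3$-cycle $R_{i,j,j+1}$, will use Ehresmann's tableau criterion for the Bruhat order.

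First I observe that by \eqref{eq:reflection-less-bruhat}, $T_{i-1,i}\le w$ iff $\mu_w(i-1)\ge i$, since the two conditions $\mu_w(i-1)\ge i$ and $\mu_{w^{-1}}(i-1)\ge i$ are each equivalent to $w([i-1])\ne[i-1]$ (the latter uses that $w$ is a bijection, so $w([i-1])=[i-1]$ iff $w^{-1}([i-1])=[i-1]$). Hence the condition $T_{i-1,i}\notin A$ is equivalent to $w([i-1])=[i-1]$. Granting this, $w$ fixes $[i-1]$ setwise, so $\mu_w(i)=w(i)$ and $\mu_{w^{-1}}(i)=w^{-1}(i)$, and \eqref{eq:reflection-less-bruhat} reduces $T_{i,j}\in A$ to $w(i)\ge j$ and $w^{-1}(i)\ge j$.

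The main work is to show that, under these hypotheses, $R_{i,j,j+1}\le w$ if and only if $w^{-1}(i)\ge j+1$. The forward direction is quick: the one-line notation of $R_{i,j,j+1}$ maps the positions $\{1,\ldots,j\}$ bijectively onto $\{1,\ldots,j+1\}\setminus\{i\}$, so $|\{k\le j : R_{i,j,j+1}(k)\ge i+1\}|=j-i+1$. Applying Ehresmann's criterion at $(p,q)=(j,i+1)$ gives $|\{k\le j : w(k)\ge i+1\}|\ge j-i+1$; combined with $w([i-1])=[i-1]$, this forces $w(k)\ne i$ for all $k\le j$, i.e.\ $w^{-1}(i)>j$. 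For the converse, assuming $w^{-1}(i)>j$, the values $w(i),\ldots,w(j)$ are $j-i+1$ distinct elements of $\{i+1,\ldots,n\}$, from which I would verify the remaining tableau inequalities for $R_{i,j,j+1}\le w$ by a direct count at each relevant $(p,q)$.

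Combining the three translations, $T_{i,j}$ is a wedge iff $w([i-1])=[i-1]$, $w(i)\ge j$, $w^{-1}(i)\ge j$, and $w^{-1}(i)\le j$, which collapses to the stated condition $w([i-1])=[i-1]$ and $w(i)\ge j = w^{-1}(i)$. The main obstacle is the converse direction of the 3-cycle step: checking every tableau inequality requires some bookkeeping, but the strong structural constraint that $w$ sends positions $\{i,\ldots,j\}$ into $\{i+1,\ldots,n\}$ makes each relevant inequality either tight or automatically slack.
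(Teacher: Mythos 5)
The paper gives no proof of this lemma at all: it is imported verbatim as Lemma~3.8a of \cite{Gilboa_2021}, so there is no in-paper argument to compare yours against. On its own terms, your argument is correct and self-contained. The translations of the first two wedge conditions via \eqref{eq:reflection-less-bruhat} are exactly right (including the observation that $w([i-1])=[i-1]$ makes $\mu_w(i)=w(i)$ and $\mu_{w^{-1}}(i)=w^{-1}(i)$), and your bookkeeping of how the two directions of the $R_{i,j,j+1}$ equivalence feed into the two directions of the lemma is sound: the quick tableau inequality at $(j,i+1)$ handles ``conditions $\Rightarrow$ wedge,'' while the deferred verification handles ``wedge $\Rightarrow$ conditions.'' Two small points. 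First, the deferred count does go through: under $w([i-1])=[i-1]$, $w(i)\ge j$, and $w^{-1}(i)\ge j+1$, the only nontrivial tableau inequalities are at $i\le p\le j$, $i+1\le q\le j+1$, where the values $w(i),\dots,w(p)$ are $p-i+1$ distinct elements of $\{i+1,\dots,n\}$ with $w(i)\ge j$, giving at least $\max(0,p-q+1)+1$ of them $\ge q$ (the case $(p,q)=(j,j+1)$ needs the pigeonhole remark that $j-i+1$ distinct values in $\{i+1,\dots,n\}$ cannot all lie in $\{i+1,\dots,j\}$); so the sketch is completable as claimed. Second, your computation of the one-line form of $R_{i,j,j+1}$ presumes the standard right-to-left composition convention, under which $R_{i,j,j+1}$ sends $i\mapsto j\mapsto j+1\mapsto i$; this is the convention the paper uses (right multiplication by $T_{a,b}$ swaps positions, as in the proof of Lemma~\ref{w'_ordering}), and with the opposite convention the lemma itself would fail, so it is worth stating explicitly. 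Your proof also never uses the admissibility hypothesis on $\mathbf{C}(w)$, which is fine --- the equivalence only involves Bruhat comparisons with $w$ --- but worth noting since the statement carries that hypothesis.
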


\section{Preliminary Lemmas} 
\label{Lemmas_Section} 

This section proves some preliminary results needed for the proof of Theorem~\ref{thm:intro-main}. Throughout this section let $w\in S_n$ be smooth, and suppose there exists a wedge $T_{i,j}$ for the admissible set $\mathbf{C}(w)$. 

\begin{lemma}
\label{first_right_side_lemma}
If $w(i)>j$, then there exists an index $j'$ with $j+1\le j' \le n$ such that $w(j) < w(j') < w(i)$. 
\end{lemma}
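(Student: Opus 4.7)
The plan is to argue by contradiction using a counting/pigeonhole argument, then to invoke smoothness via pattern avoidance to handle the boundary case. First, I would unpack the wedge hypothesis via Lemma~\ref{Lemma_3.8_Lapid}, which gives $w([i-1])=[i-1]$, $w(j)=i$, and $w(i)\ge j$. In particular, $w$ restricts to a bijection of $\{i,\ldots,n\}$ onto itself.

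Suppose for contradiction that no $j' \in \{j+1,\ldots,n\}$ satisfies $w(j)<w(j')<w(i)$. For any such $j'$, the value $w(j')$ lies in $\{i+1,\ldots,n\}\setminus\{w(i)\}$ (using $w(j)=i$ and injectivity), so the failure of the conclusion forces $w(j')>w(i)$. But then the $n-j$ positions $j'>j$ must carry $n-j$ distinct values strictly exceeding $w(i)$, while only $n-w(i)$ such values exist in $[n]$. Hence $n-j \le n-w(i)$, i.e.\ $w(i)\le j$, which combined with $w(i)\ge j$ forces $w(i)=j$.

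In this tight equality case the configuration becomes rigid: positions $\{j+1,\ldots,n\}$ carry exactly the values $\{j+1,\ldots,n\}$, so by elimination the middle positions $\{i+1,\ldots,j-1\}$ carry exactly the values $\{i+1,\ldots,j-1\}$. Now I would invoke smoothness: for any $i<k_1<k_2<j$ with $w(k_1)<w(k_2)$, the quadruple $(j,w(k_1),w(k_2),i)$ at positions $(i,k_1,k_2,j)$ forms the forbidden pattern $4231$, since $i<w(k_1)<w(k_2)<j$. Thus $w|_{\{i+1,\ldots,j-1\}}$ must be strictly decreasing, producing a highly constrained ``decreasing block'' of values $j,j-1,\ldots,i+1,i$ at positions $i,i+1,\ldots,j$.

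The main obstacle is closing out this rigid boundary configuration. My plan here is to exploit the wedge's third requirement $R_{i,j,j+1}\notin \mathbf{C}(w)$ together with a rank-matrix (or further pattern-avoidance) argument involving positions in the tail $\{j+1,\ldots,n\}$ to force a final contradiction. Whereas the initial counting step runs cleanly whenever $w(i)>j$, all the real work is concentrated in eliminating this degenerate decreasing-block case, which is where the interplay between smoothness and the wedge hypothesis must be pushed the hardest.
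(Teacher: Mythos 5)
Your opening reduction is carried out correctly: under the contrary assumption every $j'>j$ must have $w(j')>w(i)$, the pigeonhole count forces $w(i)\le j$ and hence $w(i)=j$, and then positions $i,\dots,j$ carry the values $j,j-1,\dots,i$ (your $4231$ argument for the decreasing middle block is fine) while positions $j+1,\dots,n$ carry the values $\{j+1,\dots,n\}$. The gap is the step you leave as a ``plan'': no contradiction can be extracted from this residual configuration, because such permutations genuinely satisfy every hypothesis of the lemma. For example $w=3214$ is smooth, and $T_{1,3}$ is a wedge for $\mathbf{C}(w)$ by Lemma~\ref{Lemma_3.8_Lapid} (here $w(1)=3\ge 3=w^{-1}(1)$, and one checks directly that $R_{1,3,4}\not\le w$), yet there is no $j'\ge 4$ with $1<w(j')<3$; already $w=213$ with wedge $T_{1,2}$ violates the conclusion. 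So the third wedge condition $R_{i,j,j+1}\notin\mathbf{C}(w)$ cannot rescue the argument --- it holds in these examples --- and the case $w(i)=j$ you isolate is not a degenerate case to be eliminated but a family of counterexamples to the statement as literally written.

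For comparison, the paper's proof closes this same hole by asserting that ``we have $w(i')>w(i)\ge j$ for some $i'<j$,'' which hands it an $(n-j+1)$-st position mapping into the $n-j$ values $\{j+1,\dots,n\}$ and an immediate contradiction. That assertion is not a consequence of the stated hypotheses (it fails for $3214$); it is an implicit extra hypothesis that happens to be available at the one place the lemma is used, namely inside the proof of Lemma~\ref{lem:wedge-implies-decreasing}, where one has already assumed for contradiction that $w(i')>w(i)$ for some $i'\in(i,j)$. The upshot is that your more careful bookkeeping has exposed a defect in the lemma's statement rather than a fixable defect in your own argument: once the hypothesis ``$w(i')>w(i)$ for some $i<i'<j$'' is added (which is harmless for the application), either your count or the paper's finishes in one line and the entire decreasing-block analysis becomes unnecessary. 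Without that hypothesis, the final contradiction you are hoping to force does not exist.
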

\begin{proof}

For the sake of contradiction, assume no such $j'$ exists. Then $w(j') > w(i)$ or $w(j') < w(j)$ for all $j+1\le j' \le n$. Suppose the latter occurred for some $j'$. Then $w(j') < w(j)=i$. But we know $\{w(1),\ldots,w(i-1)\} = \{1,\ldots,i-1\}$ by Lemma \ref{Lemma_3.8_Lapid}, so $j' < i$, contradiction. Hence for all $j' \in [j+1,n]$, we have $w(j') > w(i) > j$, which means $w(j') \in \{j+2,\ldots,n\}$, contradicting the injectivity of $w$.
\end{proof}

\begin{lemma}
\label{lem:wedge-implies-decreasing}
We have $w(i)>w(i+1)>\cdots>w(j)$.
\end{lemma}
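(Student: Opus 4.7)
The plan is to prove this by contradiction using the Lakshmibai--Sandhya pattern-avoidance characterization of smoothness (cited in the excerpt just after equation~(2.2)): since $w$ is smooth, it avoids both $3412$ and $4231$. Assuming for contradiction that there is some $i\le a<j$ with $w(a)<w(a+1)$, I will produce one of these forbidden patterns in $w$. Throughout, I rely on the wedge data given by Lemma~\ref{Lemma_3.8_Lapid}: $w$ fixes $[i-1]$ pointwise, $w(i)\ge j$, and $w(j)=i$; in particular $w(r)>i$ for every $i\le r<j$.

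The argument splits into three cases on $a$. The case $a=j-1$ is immediate: $w(j-1)<w(j)=i$ contradicts $w(j-1)>i$. The endpoint case $a=i$ first forces $i+1<j$ (otherwise $w(i+1)=i<w(i)$ is absurd), and then a single application of Lemma~\ref{first_right_side_lemma} produces $j'>j$ with $i<w(j')<w(i)$; the positions $i<i+1<j<j'$ then realize a $3412$-pattern at values $w(i),w(i+1),i,w(j')$, since these are ordered $i<w(j')<w(i)<w(i+1)$.

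The interior case $i<a<j-1$ is the main obstacle and is handled by a further trichotomy on where $w(i)$ falls relative to $w(a)$ and $w(a+1)$ (both of which exceed $i$, using $w(j)=i$ together with Lemma~\ref{Lemma_3.8_Lapid}). If $w(i)>w(a+1)$, then positions $i,a,a+1,j$ with values $w(i),w(a),w(a+1),i$ realize a $4231$-pattern directly. Otherwise $w(i)$ is too small to complete such a pattern on its own, but the auxiliary index $j'$ from Lemma~\ref{first_right_side_lemma} can serve as the ``$2$'' of a $3412$-pattern: if $w(a)<w(i)<w(a+1)$, use positions $i,a+1,j,j'$; if $w(i)<w(a)$, use positions $a,a+1,j,j'$. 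In both sub-cases the inequality $w(j')<w(i)$ coming from Lemma~\ref{first_right_side_lemma} places $w(j')$ below $w(a)$ and $w(a+1)$, giving the required pattern.

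The conceptual point, and the reason Lemma~\ref{first_right_side_lemma} was proved just before this lemma, is that it supplies exactly the intermediate value needed to either close the gap in a $4231$-attempt or convert it into a $3412$-pattern, so the trichotomy on $w(i)$ is exhaustive.
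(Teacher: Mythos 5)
Your proof is correct and uses essentially the same ingredients as the paper's: the Lakshmibai--Sandhya $3412$/$4231$ avoidance, the wedge data from Lemma~\ref{Lemma_3.8_Lapid}, and the auxiliary index $j'$ from Lemma~\ref{first_right_side_lemma} to complete the $3412$ patterns. The paper organizes the contradiction slightly differently (first bounding all interior values by $w(i)$ via $3412$, then ruling out arbitrary ascents via $4231$), but your reduction to adjacent ascents with a trichotomy on $w(i)$ is just a reshuffling of the same argument.
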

\begin{proof}
By Lemma \ref{Lemma_3.8_Lapid}, we know $\{w(1),\ldots,w(i-1)\}=\{1,\ldots,i-1\}$ and $w(i)\ge j$ and $w^{-1}(i)=j$, so $w(j)=i$. 

We claim that $w(i+1),\ldots,w(j-1) \le w(i)$. Suppose $w(i')>w(i)$ for some $i'\in (i,j)$. Note $i=w(j)\le j\le w(i)$. Suppose $w(i)>j$, then by Lemma \ref{first_right_side_lemma}, there exists some $j'\in [j+1,n]$ for which $w(j) < w(j') < w(i)$. Now, $i<i'<j<j'$, and 
\[ w(j)<w(j')<w(i)<w(i'). \]
Therefore, $w$ contains a $3412$ pattern, contradicting smoothness. Otherwise, we have $w(i)=j$; the assumption $w(i')>w(i)$ implies that some $k \in \{i+1,\ldots,j-1\}$ is not among $\{w(j-1),\ldots,w(i+1)\}$ and since $w([i-1])=[i-1]$, taking $j'=w^{-1}(k)$ again yields an occurrence of $3412$ in positions $i<i'<j<j'$. Thus in either case $w(i') \le w(i)$ for all $i'\in (i,j)$. 

We claim that $w(i+1),\ldots,w(j) \ge i$. This is because $\{w(1),\ldots,w(i-1)\} = \{1,\ldots,i-1\}$ by Lemma \ref{Lemma_3.8_Lapid}, so we must have $w(i')\ge i$ for all $i' \ge i$. So in fact $w(i') > i$ for all $i' \in (i,j)$ since $w(j)=i$. 

Now, if we ever have $w(i') < w(j')$ for some $i<i'<j'<j$, then 
\[ w(i) > w(j') > w(i') > w(j)=i,\]
contradicting $w$ avoiding the 4231 pattern. Therefore, $w(i) > w(i+1)>\cdots>w(j)$. 
\end{proof}

\begin{lemma}
\label{w'_ordering}
Consider a smooth $w$. If $T_{i,j}$ is a wedge for the admissible set $\mathbf{C}(w)$, and 
\[ w'=wT_{i,i+1}T_{i,i+2}\cdots T_{i,j},\]
then 
\[ w'(i+1)>w'(i+2)>\cdots>w'(j)>w'(i).\]
\end{lemma}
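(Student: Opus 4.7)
The plan is to compute $w'$ explicitly on the positions $i, i+1, \ldots, j$, and then appeal directly to Lemma~\ref{lem:wedge-implies-decreasing}. The key observation is that the product $\sigma := T_{i,i+1}T_{i,i+2}\cdots T_{i,j}$, although written as a product of transpositions, is in fact a single cycle with a very predictable effect on $\{i, i+1, \ldots, j\}$.

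First I would establish, either by a short induction on $j-i$ or by direct computation, that $\sigma$ is the cycle $(i,\, j,\, j-1,\, \ldots,\, i+1)$; that is, $\sigma(i)=j$, $\sigma(i+1)=i$, and $\sigma(k)=k-1$ for $i+2 \le k \le j$. Since $w' = w\sigma$ and we interpret composition as $w'(k) = w(\sigma(k))$, this gives immediately
\[
w'(i) = w(j), \qquad w'(i+1) = w(i), \qquad w'(i+k) = w(i+k-1) \text{ for } 2 \le k \le j-i.
\]

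Next I would invoke Lemma~\ref{lem:wedge-implies-decreasing}, which says that $w(i) > w(i+1) > \cdots > w(j)$, together with Lemma~\ref{Lemma_3.8_Lapid}, which pins down $w(j) = i$. Substituting these into the expressions above yields
\[
w'(i+1) = w(i) > w(i+1) = w'(i+2) > \cdots > w(j-1) = w'(j),
\]
and the final inequality $w'(j) > w'(i)$ is precisely $w(j-1) > w(j) = i$, which is part of the chain in Lemma~\ref{lem:wedge-implies-decreasing}.

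No part of the argument is genuinely difficult: essentially all the work has already been done in the previous lemma. The only step requiring any care is the initial identification of $\sigma$ as the claimed cycle, and even that is a routine computation. The main virtue of stating this lemma separately is that it isolates the form of $w'$ on the positions altered by the wedge, which will presumably be needed in the inductive argument for Theorem~\ref{thm:intro-main}.
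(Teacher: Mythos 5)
Your proposal is correct and follows essentially the same route as the paper: both arguments reduce to computing the action of $T_{i,i+1}\cdots T_{i,j}$ on the positions $i,\ldots,j$ (you via the composite cycle $\sigma$ with $\sigma(i)=j$, $\sigma(i+1)=i$, $\sigma(k)=k-1$; the paper via an induction tracking the partial products $w_d$ and their inequality chains) and then invoke the decreasing chain $w(i)>w(i+1)>\cdots>w(j)$ from Lemma~\ref{lem:wedge-implies-decreasing}. The identification of $\sigma$ and the position-swapping convention $w'(k)=w(\sigma(k))$ are both consistent with the paper's usage, so no gap.
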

\begin{proof}
For each $1\le d \le j-i$, let
\[ w_d=wT_{i,i+1}T_{i,i+2}\cdots T_{i,i+d}. \]
We claim
\[
w_d(i+1)>w_d(i+2)>\cdots>w_d(i+d) > w_d(i)  >w_d(i+d+1)>w_d(i+d+2)>\cdots>w_d(j).     
\]
Induct on $d$. For the base case of $d=1$, since we know $w(i)>w(i+1)>\cdots>w(j)$ by Lemma~\ref{lem:wedge-implies-decreasing}, $w_1=wT_{i,i+1}$ satisfies
\[ w_1(i+1)>w_1(i)>w_1(i+2)>\cdots>w_1(j),\] 
as needed. Now notice that $w_{d+1}=w_dT_{i,i+d+1}$. Swapping $w_d(i)$ and $w_d(i+d+1)$ in the inequality chain for $d$ gives
\[
w_d(i+1)>w_d(i+2)>\cdots>w_d(i+d) > w_d(i+d+1)>w_d(i)>w_d(i+d+2)>\cdots>w_d(j).
\]
This completes the induction. Finally, we plug in $d=j-i$: notice $w'=w_{j-i}$, so
\[ w'(i+1)>w'(i+2)>\cdots>w'(j)>w'(i).\]
\end{proof}

For the remainder of this section let $t_1\prec \cdots \prec t_k$ be a compatible order on $\mathbf{C}_{\mathcal{T}}(w)$; such an order exists by Corollary \ref{Exists_compatible_order_for_C_w}. Define: 
\begin{itemize}
    \item $w'=T_{i,j}T_{i,j-1}\cdots T_{i,i+1}$,
    \item $q_d=t_d t_{d+1}\cdots t_k w'$, for each $1\le d\le k$, and
    \item $w_d'=T_{i,i+d}T_{i,i+d-1}\cdots T_{i,i+1}$, for each $1\le d \le j-i$. 
\end{itemize}

We omit the elementary proof of the following observation:

\begin{obs}
\label{Tail_properties} \text{}
\begin{enumerate}
    \item[(a)] For all $1\le d \le j-i$, $\ell(w'_d)=d$. 
    \item[(b)] $(w'_{j-i}(i),w'_{j-i}(i+1),\ldots,w'_{j-i}(j))=(i+1,i+2,\ldots,j,i)$. 
\end{enumerate}
\end{obs}

\begin{obs}
\label{No_swap_less_i_more_i}
None of $t_1,\ldots,t_k$ are of the form $T_{x,y}$, where $x<i\le y$. 
\end{obs}
\begin{proof}
Suppose $t_\ell = T_{x,y}$ for some $\ell \in [1,k]$ and for some $x<i \le y$. By identity (1.7a) of \cite{Gilboa_2021} we have $T_{i-1,i} \le T_{x,y}$ and so, since $\mathbf{C}_{\mathcal{T}}(w)$ is downward closed, we have $T_{i-1,i}\in \mathbf{C}_{\mathcal{T}}(w)$, contradicting the fact that $T_{i,j}$ is a wedge. 
\end{proof}

\begin{lemma}
For all $1\le d\le k$, we have
\begin{align*}
    \{q_d(1),\ldots,q_d(i-1)\} &= \{1,\ldots,i-1\}, \\
    \{ q_d(i),\ldots,q_d(n)\} &= \{i,\ldots,n\}. 
\end{align*}
\end{lemma}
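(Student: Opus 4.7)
The plan is to reduce the statement directly to the preceding lemma by first describing $w'$ explicitly as a permutation. Since $w'=T_{i,j}T_{i,j-1}\cdots T_{i,i+1}$ is a product of reflections whose supports lie in $\{i,i+1,\ldots,j\}$, $w'$ fixes every index outside this block. Lemma~\ref{Tail_properties}(b), applied with $d=j-i$ so that $w'_{j-i}=w'$, describes its action on the block: $w'(i)=i+1$, $w'(i+1)=i+2$, $\ldots$, $w'(j-1)=j$, and $w'(j)=i$. In particular, $w'$ restricts to a bijection on each of $\{1,\ldots,i-1\}$ and $\{i,\ldots,n\}$.

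With this in hand, the two set equalities follow from a short calculation. Since $q_d=p_dw'$, meaning $q_d(x)=p_d(w'(x))$, we get
\[
\{q_d(1),\ldots,q_d(i-1)\}=p_d(w'(\{1,\ldots,i-1\}))=p_d(\{1,\ldots,i-1\})=\{1,\ldots,i-1\}
\]
by the preceding lemma. Analogously, $w'(\{i,\ldots,n\})=\{i,\ldots,n\}$, so
\[
\{q_d(i),\ldots,q_d(n)\}=p_d(\{i,\ldots,n\})=\{p_d(i)\}\cup\{p_d(i+1),\ldots,p_d(n)\}=\{i\}\cup\{i+1,\ldots,n\}=\{i,\ldots,n\},
\]
again using the preceding lemma.

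I do not anticipate any real obstacle here. Once one recognizes $w'$ as a cyclic shift on $\{i,\ldots,j\}$ that fixes the complementary indices, the proof is essentially bookkeeping. The only point that requires some care is to work at the level of set images rather than pointwise, because $w'$ permutes positions inside $\{i,\ldots,j\}$ nontrivially, so the individual values $q_d(x)$ and $p_d(x)$ need not agree even though the relevant sets of values do.
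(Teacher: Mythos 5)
Your proof is correct and takes essentially the same route as the paper's: both arguments come down to the facts that $w'$ preserves the blocks $\{1,\dots,i-1\}$ and $\{i,\dots,n\}$ (via Lemma~\ref{Tail_properties}) and that the $t_l$'s cannot move values across the boundary between these blocks. Your packaging is marginally cleaner, since you cite the already-proved lemma on $p_d$ as a black box where the paper re-invokes Corollary~\ref{No_swap_less_i_more_i} directly, but the underlying argument is identical.
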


\begin{proof}
By Observation \ref{Tail_properties}, notice
\[ (w'(1),\ldots,w'(n))=(1,\ldots,i-1,i+1,\ldots,j,i,j+1,\ldots,n),\]
and also note by Observation \ref{No_swap_less_i_more_i} that none of $t_1,\ldots,t_k$ swap a pair of numbers one of which is less than $i$ and the other greater than or equal to $i$. Hence the numbers $1,\ldots,i-1$ will be in the first $i-1$ positions for all $q_d$, since this is the case for $w'$, and successively applying $t_k$, then $t_{k-1}$, and so on will not change this.
\end{proof}

\section{Proof of Theorem~\ref{thm:intro-main}}
\label{Main_Theorem_Section}

The following result gives part of Theorem~\ref{thm:intro-main}.

\begin{thm}
\label{Our_Main_Thm}
For any smooth $w\in S_n$, there exists a compatible order $\prec$ on $\mathbf{C}_{\mathcal{T}}(w)$ with $t_1\prec t_2\prec \cdots \prec t_k$ satisfying the following conditions: 
\begin{itemize}
    \item $t_1t_2\cdots t_k = w$. 
    \item $e\to t_1 \to t_1t_2 \to \cdots \to t_1\cdots t_k=w$ is a saturated chain in Bruhat order. 
    \item $e\to t_k \to t_kt_{k-1} \to \cdots \to t_k\cdots t_1=w^{-1}$ is a saturated chain in Bruhat order.  
\end{itemize}
\end{thm}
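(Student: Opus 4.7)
The plan is to prove Theorem~\ref{Our_Main_Thm} by strong induction on $\ell(w) = k$. The base case $w = e$ is trivial. For the inductive step, pick a wedge $T_{i,j}$ for $\mathbf{C}(w)$ (existence for any smooth $w \ne e$ underlies Corollary~\ref{Exists_compatible_order_for_C_w}), set $m = k - (j-i)$, and let $w^\circ := wT_{i,i+1}T_{i,i+2}\cdots T_{i,j}$, the element denoted $w'$ in Lemma~\ref{w'_ordering}. I would first verify that $w^\circ \to w^\circ T_{i,j} \to w^\circ T_{i,j} T_{i,j-1} \to \cdots \to w^\circ T_{i,j}T_{i,j-1}\cdots T_{i,i+1} = w$ is a saturated chain via Lemma~\ref{Bjorner_bruhat_covering}: by Lemma~\ref{w'_ordering}, $w^\circ(i+1) > w^\circ(i+2) > \cdots > w^\circ(j) > w^\circ(i)$, so after $s$ swaps position $i$ holds $w^\circ(j-s+1)$ while each position $c \in (i, j-s)$ still holds $w^\circ(c) > w^\circ(j-s) > w^\circ(j-s+1)$, so no $c$ blocks the next cover by $T_{i,j-s}$. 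In particular $w^\circ \le w$ and $\ell(w^\circ) = m$. Since $w^\circ(i) = w(j) = i$ (by Lemma~\ref{Lemma_3.8_Lapid}), criterion~(\ref{eq:reflection-less-bruhat}) rules out $T_{i,r} \le w^\circ$ for $r \in (i,j]$, giving $\mathbf{C}_{\mathcal{T}}(w^\circ) \subseteq A_{\mathcal{T}}^\circ$. Since $|A_{\mathcal{T}}^\circ| = m$ and the standard bound $|\mathbf{C}_{\mathcal{T}}(w^\circ)| \ge \ell(w^\circ) = m$ applies, equality holds and $w^\circ$ is smooth via~(\ref{eq:smooth-in-terms-of-length}).

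Applying the inductive hypothesis to $w^\circ$ yields a compatible order $\prec^\circ$ on $A_{\mathcal{T}}^\circ$ satisfying the three conditions for $w^\circ$; I extend it by Lemma~\ref{Compatible_order_construction} to obtain $\prec$ on $\mathbf{C}_{\mathcal{T}}(w)$ with $t_{m+1} = T_{i,j}, \ldots, t_k = T_{i,i+1}$. Condition~1 is immediate from $w^\circ \cdot T_{i,j}T_{i,j-1}\cdots T_{i,i+1} = w$. For condition~2, the initial segment $e \to \cdots \to t_1\cdots t_m = w^\circ$ is saturated by induction, and the tail $w^\circ \to \cdots \to w$ is precisely the saturated chain verified in the previous paragraph.

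For condition~3, decompose the chain as $e \to T_{i,i+1} \to \cdots \to T_{i,i+1}\cdots T_{i,j} =: v \to v t_m \to \cdots \to v(w^\circ)^{-1} = w^{-1}$. The first segment is saturated since $T_{i,i+1}\cdots T_{i,i+s}$ is the $(s+1)$-cycle $(i,\, i+s,\, i+s-1,\, \ldots,\, i+1)$ with exactly $s$ inversions. By the inductive hypothesis, the chain $e \to t_m \to t_m t_{m-1} \to \cdots \to (w^\circ)^{-1}$ is saturated of length $m$, so the remaining task is to show that left-multiplication by $v$ preserves saturation.

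The main obstacle is establishing this preservation. I would prove the length-additivity lemma: if $y \in S_n$ satisfies $y(i) = i$, $y([1,i-1]) = [1,i-1]$, and $y([i+1,n]) = [i+1,n]$, then $\ell(vy) = \ell(v) + \ell(y) = (j-i) + \ell(y)$. Observation~\ref{Observation_swapping_i} and Corollary~\ref{No_swap_less_i_more_i} together ensure that every $t_s$ with $s \le m$ is a reflection $T_{p,q}$ with either $p<q<i$ or $i<p<q$, so each prefix $y_d := t_m t_{m-1}\cdots t_d$ satisfies the hypothesis. The lemma follows from a direct inversion count: partitioning pairs $(a,b)$ with $a<b$ by which of the blocks $[1,i-1]$, $\{i\}$, $[i+1,n]$ they lie in and subdividing pairs involving $[i+1,n]$ according to whether the $y$-value falls in $[i+1,j]$ (where $v$ shifts down by one) or $[j+1,n]$ (where $v$ fixes), one finds that the $y$-inversions contribute exactly $\ell(y)$ without cancellation while the pairs $(i,b)$ with $b \in [i+1,n]$ and $y(b) \in [i+1,j]$ contribute an additional $j-i$ inversions. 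Applied to each $y_d$, this gives $\ell(v y_d) = k-d+1$, so every step in the tail of condition~3 is a length-one cover, completing the induction.
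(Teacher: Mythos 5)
Your argument is correct and rests on the same core mechanism as the paper's proof: the wedge $T_{i,j}$, the factorization $w = w^\circ \cdot T_{i,j}T_{i,j-1}\cdots T_{i,i+1}$, Lemma~\ref{w'_ordering} combined with Lemma~\ref{Bjorner_bruhat_covering} for the forward chain, and the block-preservation facts (Observation~\ref{Observation_swapping_i}, Corollary~\ref{No_swap_less_i_more_i}) for the reverse chain. The packaging differs in ways worth noting. The paper proves the two chain conditions in separate propositions (Propositions~\ref{Saturated_chain_prefix} and~\ref{Saturated_chain_suffix}), each by induction on the recursive wedge construction, and imports the identity $t_1\cdots t_k = w$ from Theorem~\ref{Gilboa_Main_Thm}; you instead run a single induction on $\ell(w)$ carrying all three conditions simultaneously, which makes the product identity come out for free. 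You also supply two things the paper leaves implicit: a self-contained proof that $w^\circ$ is smooth with $\mathbf{C}_{\mathcal{T}}(w^\circ) = A^\circ_{\mathcal{T}}$ (via the sandwich $\ell(w^\circ) = m = |A^\circ_{\mathcal{T}}| \ge |\mathbf{C}_{\mathcal{T}}(w^\circ)| \ge \ell(w^\circ)$), and a precise length-additivity lemma $\ell(vy)=\ell(v)+\ell(y)$ proved by an inversion count, which is a cleaner rendering of the paper's closing argument in Proposition~\ref{Saturated_chain_suffix} (``removing the value $i$ makes the remaining sequence increasing''). One caveat, which you share with the paper: to invoke Lemma~\ref{Compatible_order_construction} you need $\prec^\circ$ to be compatible with the admissible set $A^\circ$, whereas your inductive hypothesis gives compatibility with $\mathbf{C}(w^\circ)$; your argument identifies the reflection parts of these two sets, but equality of their $3$-cycle parts (which is what the second compatibility condition sees) is a fact from Gilboa--Lapid's Lemma 3.10 that neither you nor the paper spells out. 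This affects only the claim that the constructed order is compatible, not the saturated-chain conclusions.
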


Let $w\in S_n$ be smooth; we saw in Section \ref{Background Section} that $A=\mathbf{C}(w)$ is admissible. As stated in Remark 3.4 of \cite{Gilboa_2021}, at least one of $A$ or $A^{-1}$ has a wedge. Observation 4.3 of \cite{Gilboa_2021} implies that the reverse of a compatible order on $A$ is a compatible order on $A^{-1}=\mathbf{C}_{\mathcal{T}}(w^{-1})$, the admissible set corresponding to the smooth permutation $w^{-1}$. Note that the second saturated chain property in Theorem \ref{Our_Main_Thm} is the same as the first saturated chain property if we reverse the order of $t_1,\ldots,t_k$. Therefore, we can without loss of generality assume that $A$ has a wedge. 

\begin{prop}
\label{Saturated_chain_prefix}
Let $w\in S_n$ be smooth, and let $t_1\prec \cdots \prec t_k$ be the compatible order of $\mathbf{C}_{\mathcal{T}}(w)$ described prior to Corollary \ref{Exists_compatible_order_for_C_w}. Then
\[ e\to t_1\to t_1t_2\to \cdots \to t_1\cdots t_k\]
is a saturated chain in Bruhat order. 
\end{prop}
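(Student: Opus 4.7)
The plan is to induct on $\ell(w)$; the base case $w = e$ is immediate. Since the paragraph preceding the proposition reduces us to the case where $A = \mathbf{C}(w)$ admits a wedge $T_{i,j}$, I would start by noting that the constructed compatible order then has the form
\[
t_1 \prec \cdots \prec t_m \prec T_{i,j} \prec T_{i,j-1} \prec \cdots \prec T_{i,i+1},
\]
where $m = k - (j-i)$ and $t_1 \prec \cdots \prec t_m$ is the analogous compatible order on $A^\circ_{\mathcal{T}}$. Setting $w^\circ = t_1 \cdots t_m$, Theorem~\ref{Gilboa_Main_Thm} identifies $w^\circ = w \cdot T_{i,i+1} T_{i,i+2} \cdots T_{i,j}$, and Lemma~3.10a of \cite{Gilboa_2021} together with Gilboa--Lapid's correspondence between admissible sets and smooth permutations ensures $w^\circ$ is smooth with $\mathbf{C}(w^\circ) = A^\circ$. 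Since $\ell(w^\circ) = \ell(w) - (j-i)$, the induction hypothesis then gives that
\[ e \to t_1 \to t_1 t_2 \to \cdots \to t_1 \cdots t_m = w^\circ \]
is a saturated chain in Bruhat order.

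The remaining task is to show the terminal segment
\[
w^\circ \to w^\circ T_{i,j} \to w^\circ T_{i,j} T_{i,j-1} \to \cdots \to w^\circ T_{i,j} T_{i,j-1} \cdots T_{i,i+1} = w
\]
also consists of Bruhat covers. For this I would reuse the sequence $w_d = w \cdot T_{i,i+1} \cdots T_{i,i+d}$ from the proof of Lemma~\ref{w'_ordering}, observing that the terminal segment is precisely $w_{j-i} \to w_{j-i-1} \to \cdots \to w_1 \to w_0 = w$, with each arrow given by right-multiplication of $w_d$ by $T_{i,i+d}$. The induction inside the proof of Lemma~\ref{w'_ordering} already furnishes, for each $1 \le d \le j-i$, the strict chain
\[
w_d(i+1) > w_d(i+2) > \cdots > w_d(i+d) > w_d(i) > w_d(i+d+1) > \cdots > w_d(j),
\]
from which $w_d(i) < w_d(i+d)$ is immediate and every position $c$ with $i < c < i+d$ carries a value exceeding $w_d(i+d)$, hence lying outside $(w_d(i), w_d(i+d))$. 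Lemma~\ref{Bjorner_bruhat_covering} then certifies each such step as a Bruhat covering relation.

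Concatenating the two saturated chains will complete the induction. The only mildly delicate step is confirming that $w^\circ$ is smooth with $\mathbf{C}(w^\circ) = A^\circ$ so that the induction hypothesis applies; I plan to invoke Gilboa--Lapid's admissibility/compatible-order machinery for this rather than reprove it. Beyond that, the work is bookkeeping already encapsulated in Lemma~\ref{w'_ordering} and Lemma~\ref{Bjorner_bruhat_covering}, so no further obstacle is anticipated.
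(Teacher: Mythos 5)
Your proposal is correct and follows essentially the same route as the paper: reduce to the wedge case, apply the inductive hypothesis to $w^\circ = wT_{i,i+1}\cdots T_{i,j}$ (identified via Theorem~\ref{Gilboa_Main_Thm} and the Gilboa--Lapid recursion), and verify the terminal segment via the decreasing chains of Lemma~\ref{w'_ordering} together with Lemma~\ref{Bjorner_bruhat_covering}. The only difference is cosmetic: you reuse the intermediate permutations $w_d$ already analyzed in the proof of Lemma~\ref{w'_ordering}, whereas the paper rederives the analogous inequality chain with a reindexed sequence.
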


\begin{proof}
We will prove that for all $0\le a \le k$, 
\[ e\to t_1\to t_1t_2\to \cdots \to t_1\cdots t_a\] 
is a saturated chain in Bruhat order by induction on $a$. The base case of $a=0$ is clear since the chain consists only of $e$. 

Let $A=\mathbf{C}_{\mathcal{T}}(w)$, and let $T_{i,j}$ be a wedge for $A$. Let $w'$ be the product of the reflections in $A_{\mathcal{T}}^\circ$ in the order $\prec^\circ$. By the inductive hypothesis, the prefix products of reflections in the order $\prec^\circ$ satisfy the saturated chain condition in Bruhat order. It hence suffices to show (due to the construction of the compatible order provided in Lemma \ref{Compatible_order_construction}) that
\[ w'\to w'T_{i,j} \to w'T_{i,j}T_{i,j-1}\to \cdots \to w'T_{i,j}T_{i,j-1}\cdots T_{i,i+1}\]
is a saturated chain in the Bruhat order. 

By Lemma \ref{w'_ordering}, 
\[ w'(i+1)>w'(i+2)> \cdots >w'(j) > w'(i). \]
For each $i+1\le d \le j$, define
\[ w_d = w'T_{i,j}T_{i,j-1}\cdots T_{i,d}.\]
We claim
\[
w_d(i+1)>w_d(i+2)>\cdots>w_d(d-1) >w_d(i)>w_d(d)>w_d(d+1)>\cdots>w_d(j). 
\]
Proceed by induction on $d$, the base case of $d=j$ being true since $w_0=w'T_{i,j}$ satisfies 
\[ w_0(i+1)>w_0(i+2)>\cdots>w_0(j-1)>w_0(i)>w_0(j) \]
simply by swapping $w'(i)$ and $w'(j)$ in the inequality chain from Lemma \ref{w'_ordering}. For the inductive step, since $w_{d+1}=w_dT_{i,d-1}$, swapping $w_d(i)$ and $w_d(d-1)$ gives
\[
w_d(i+1)>w_d(i+2)>\cdots>w_d(d-2) > w_d(i)>w_d(d-1)>w_d(d)>w_d(d+1)>\cdots>w_d(j),
\]
as desired. 

Finally, we want to show for each $d$ that $w_d \to w_dT_{i,d-1}$ is a Bruhat covering relation. This is equivalent to
\[  w_d(i)<w_d(d-1) \text{ and }w_d(i+1),\ldots,w_d(d-2) \not \in (w_d(i), w_d(d-1)),\]
by Lemma \ref{Bjorner_bruhat_covering}. 
Both of these statements hold since we proved
\[ w_d(i)>w_d(i+2)>\cdots>w_d(d-2)>w_d(d-1)>w_d(i).\]
\end{proof}

\begin{prop}
\label{Saturated_chain_suffix}
Let $w\in S_n$ be smooth, and let $t_1\prec \cdots \prec t_k$ be the compatible order of $\mathbf{C}_{\mathcal{T}}(w)$ of Corollary \ref{Exists_compatible_order_for_C_w}. Then
\[ e\to t_k\to t_kt_{k-1}\to \cdots \to t_k\cdots t_1\]
is a saturated chain in Bruhat order. 
\end{prop}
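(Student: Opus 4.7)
The plan is to mirror the inductive structure of Proposition~\ref{Saturated_chain_prefix}, inducting on $k = |A_\mathcal{T}| = \ell(w)$. After the same WLOG reduction to the case where $A = \mathbf{C}(w)$ has a wedge $T_{i,j}$, the compatible order of Corollary~\ref{Exists_compatible_order_for_C_w} decomposes by Lemma~\ref{Compatible_order_construction} as $\prec^\circ$ on $A^\circ_\mathcal{T}$ followed by $T_{i,j} \prec T_{i,j-1} \prec \cdots \prec T_{i,i+1}$, so that $t_k = T_{i,i+1}$, $t_{k-1} = T_{i,i+2}$, \ldots, $t_{k-(j-i)+1} = T_{i,j}$. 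Writing $u_d = t_k t_{k-1} \cdots t_{k-d+1}$, it suffices to verify that $u_{d-1} \to u_d$ is a Bruhat cover for each $d$.

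First I handle the initial $j-i$ steps, where $u_d = T_{i,i+1} T_{i,i+2} \cdots T_{i,i+d}$. A direct induction on $d$ analogous to that in Lemma~\ref{Tail_properties} gives
\[
u_d = (1, \ldots, i-1,\, i+d,\, i,\, i+1, \ldots, i+d-1,\, i+d+1, \ldots, n),
\]
and Lemma~\ref{Bjorner_bruhat_covering} immediately confirms each $u_{d-1} \to u_d$ is a cover. In particular, $u_{j-i}$ has the one-line form $(1,\ldots,i-1,\,j,\,i,\,i+1,\ldots,j-1,\,j+1,\ldots,n)$. For $d > j-i$, writing $d = (j-i)+e$, we have $u_d = u_{j-i} \cdot v_e$ where $v_e = t_{k-(j-i)} \cdots t_{k-(j-i)-e+1}$ is the length-$e$ suffix product of $\prec^\circ$ on $A^\circ_\mathcal{T}$. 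By the inductive hypothesis the chain $e \to v_1 \to v_2 \to \cdots$ is already saturated, so it remains to show that left-multiplication by $u_{j-i}$ preserves each of these covers.

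By Observation~\ref{Observation_swapping_i} and Corollary~\ref{No_swap_less_i_more_i}, every reflection $T_{x,y} \in A^\circ_\mathcal{T}$ has either $x,y < i$ or $x,y > i$, so each $v_e$ fixes $i$ pointwise and preserves $\{1,\ldots,i-1\}$ and $\{i+1,\ldots,n\}$ as sets. Meanwhile $u_{j-i}$ fixes $\{1,\ldots,i-1\}$ pointwise, sends $i \mapsto j$, and restricts to a strictly order-preserving injection from $\{i+1,\ldots,n\}$ onto $\{i,\ldots,n\} \setminus \{j\}$. A two-case analysis on $T_{x,y} = t_{k-(j-i)-e}$ via Lemma~\ref{Bjorner_bruhat_covering} then shows the cover $v_e \to v_e T_{x,y}$ lifts to $u_{j-i} v_e \to u_{j-i} v_e T_{x,y}$: when $x,y < i$ the relevant values are literally unchanged by $u_{j-i}$, and when $x,y > i$ both the inequality $(u_{j-i}v_e)(x) < (u_{j-i}v_e)(y)$ and the non-existence of an intermediate value follow from the order-preservation of $u_{j-i}$ on $\{i+1,\ldots,n\}$.

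The main obstacle I anticipate is the second case above, where one must rule out that the compression of values performed by $u_{j-i}$ on $\{i+1,\ldots,j\}$ introduces some "new" intermediate value $(u_{j-i}v_e)(c)$ for a $c \in (x,y)$ that was not intermediate for $v_e$. This is resolved by the fact that $u_{j-i}$ is a strict order-isomorphism when restricted to $\{i+1,\ldots,n\}$, combined with the observation that $v_e(c) > i$ for all $c > i$, which places us in the order-preserving regime and lets the comparison of values transfer cleanly from the $v_e$-chain to the $u_{j-i}v_e$-chain.
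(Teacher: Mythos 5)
Your proof is correct and follows essentially the same route as the paper: the same decomposition of the chain into the tail $T_{i,i+1},\ldots,T_{i,j}$ (verified by direct computation of one-line forms, as in Lemma~\ref{Tail_properties}) followed by the inductive hypothesis for $A^\circ$, with Observation~\ref{Observation_swapping_i} and Corollary~\ref{No_swap_less_i_more_i} used to transfer the remaining covers. The only real difference is presentational: the paper first passes to inverses and argues informally via inversion counts under left multiplication, whereas you keep the chain as stated and check the covering criterion of Lemma~\ref{Bjorner_bruhat_covering} directly using the order-preservation of $u_{j-i}=(T_{i,j}\cdots T_{i,i+1})^{-1}$ on $\{i+1,\ldots,n\}$, which in effect makes precise the step the paper leaves vague.
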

\begin{proof}
It suffices to prove that the length increases by exactly one at each step. Any element of $S_n$ and its inverse have equal lengths, so it is equivalent to show that
\[ e\to t_k \to t_{k-1}t_k \to \cdots \to t_1\cdots t_k\]
is a saturated Bruhat chain. We will show 
\[ e \to t_k \to t_{k-1}t_k\to \cdots \to t_{k-a+1}\cdots t_k\]
is a saturated Bruhat chain, for each $0\le a \le k$. In other words, $\ell(t_{k-a+1}\cdots t_k)=a$, for each $0\le a \le k$. 

Let $A=\mathbf{C}_{\mathcal{T}}(w)$. As per the construction in Lemma \ref{Compatible_order_construction}, if we let $t_1 \prec^\circ \cdots \prec^\circ t_k$ be the compatible order for $A^\circ$, then $\prec$ satisfies
\[ t_1\prec \cdots \prec t_k\prec T_{i,j}\prec T_{i,j-1}\prec \cdots \prec T_{i,i+1}. \]
Let $w'=T_{i,j}T_{i,j-1}\cdots T_{i,i+1}$, and let $w=t_1\cdots t_kT_{i,j}T_{i,j-1}\cdots T_{i,i+1}$. By induction, $e \to t_k \to t_{k-1}t_k\to \cdots \to t_1\cdots t_k$
is a saturated Bruhat chain. 

Firstly, from Observation \ref{Tail_properties}, we have
\begin{align*}
    \ell(T_{i,i+1})&=1, \\
    \ell(T_{i,i+2}T_{i,i+1})&=2, \\
    &\vdots \\
    \ell(T_{i,j}T_{i,j-1}\cdots T_{i,i+1})&=j-i, 
\end{align*}
so
\[ T_{i,i+1} \to T_{i,i+2}T_{i,i+1}\to \cdots \to T_{i,j}T_{i,j-1}\cdots T_{i,i+1}\]
is a saturated chain in Bruhat order. 

Finally, we prove that
\[ w' \to t_kw' \to t_{k-1}t_kw'\to \cdots \to t_1\cdots t_kw'\]
is a saturated chain in Bruhat order. By Observation \ref{Tail_properties}, 
\[ (w'(1),\ldots,w'(n))=(1,\ldots,i-1,i+1,\ldots,j,i,j+1,\ldots,n). \]
The key observation is that in the above, removing the value $i$ makes the remaining sequence an increasing sequence. Combining Observation \ref{No_swap_less_i_more_i} with the fact that all of $\{1,\ldots,i-1\}$ appear before $i$ for all $q_d$ and with the fact that $t_k\to t_{k-1}t_k\to \cdots t_1\cdots t_k$ is a Bruhat chain implies that the number of inversions increases by one in each step of $t_kw'\to t_{k-1}t_kw'\to \cdots \to t_1\cdots t_kw'$.
\end{proof}

Combining Propositions \ref{Saturated_chain_prefix} and \ref{Saturated_chain_suffix} along with Theorem \ref{Gilboa_Main_Thm} proves Theorem \ref{Our_Main_Thm}. In particular, note that the compatible order for which we proved the properties of Theorem \ref{Our_Main_Thm} is the compatible order constructed from Corollary \ref{Exists_compatible_order_for_C_w}. 

We proved above that the properties of Theorem \ref{Our_Main_Thm} hold for one particular compatible order on $\mathbf{C}(w)$, we now extend this statement to all compatible orders for $\mathbf{C}(w)$.

\begin{prop}
\label{theorem_all_compatible_orders}
The conditions from Theorem~\ref{Our_Main_Thm} hold for any compatible order $\prec$ on $\mathbf{C}_{\mathcal{T}}(w)$.
\end{prop}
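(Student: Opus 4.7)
The plan is to reduce this proposition to Propositions \ref{Saturated_chain_prefix} and \ref{Saturated_chain_suffix}, which already establish the three conditions of Theorem \ref{Our_Main_Thm} for the canonical compatible order of Corollary \ref{Exists_compatible_order_for_C_w}, by showing those conditions are invariant under certain local moves that connect all compatible orders. Since the first condition $t_1 \cdots t_k = w$ is automatic from Theorem \ref{Gilboa_Main_Thm}, and the suffix chain property for $w$ is equivalent to the prefix chain property for the reversed order (a compatible order on $\mathbf{C}_{\mathcal{T}}(w^{-1})$ by Observation 4.3 of \cite{Gilboa_2021}, with $w^{-1}$ also smooth), it suffices to prove the prefix chain property for every compatible order on $\mathbf{C}_{\mathcal{T}}(w)$.

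I would identify two local moves that connect compatible orders: (a) a \emph{disjoint swap} of two adjacent reflections $t_a, t_{a+1}$ whose index sets are disjoint (they commute, and no compatibility constraint involves disjoint reflections); and (b) a \emph{triple reversal} of three consecutive reflections $T_{i,j}, T_{i,k}, T_{j,k}$ with $i<j<k$ (their product equals $T_{i,k}$ in both orderings, and the triple's compatibility constraint is bi-directional while the relative positions with outside reflections are unchanged). Both moves preserve compatibility and the total product $w$, and any two compatible orders are connected by a sequence of such moves, since the compatibility conditions constrain only triples and pairs sharing indices.

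The prefix chain property is then preserved under each move. For (a), the prefix product changes only at position $a$: the new intermediate element $w_{a-1} t_{a+1}$ is the other length-$a$ element of the rank-2 Bruhat diamond $[w_{a-1}, w_{a+1}]$ (Bruhat rank-2 intervals in any Coxeter group being diamonds), so the chain remains saturated. For (b), the two affected intermediate products become $w_{a-1} T_{j,k}$ and $w_{a-1} T_{j,k} T_{i,k}$, which live in the rank-3 Bruhat interval $[w_{a-1}, w_{a-1} T_{i,k}]$; this interval has the structure of the hexagonal $S_3$-interval generated by the reflections on $\{i, j, k\}$ lifted through $w_{a-1}$, admitting exactly two saturated chains from bottom to top---one for each triple ordering.

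The hard part will be verifying the rank-3 interval analysis used in move (b): specifically, showing that $[w_{a-1}, w_{a-1} T_{i,k}]$ is isomorphic to the standard $S_3$-hexagonal interval, so that both triple orderings indeed give saturated Bruhat chains. This should follow from the smoothness of $w$, the admissibility of $\mathbf{C}(w)$, and the observation that the triple $T_{i,j}, T_{i,k}, T_{j,k}$ acts freely on the relevant prefix, ruling out the pentagonal or degenerate rank-3 Bruhat intervals that would fail to accommodate both triple orderings as saturated chains.
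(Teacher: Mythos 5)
Your architecture is exactly the paper's: reduce to the canonical order of Corollary~\ref{Exists_compatible_order_for_C_w} via the two local moves (commuting swap and triple reversal), check that each move preserves the saturated-chain conditions, and invoke connectivity of the resulting graph on compatible orders. Your rank-2 diamond argument for the commuting swap is also the paper's (Proposition~\ref{operation_1_preserves_chain}, which additionally uses Dyer's description of reflections in the dihedral reflection subgroup to pin down the fourth vertex of the diamond as $w'T_{k,l}$). Two points, however, are left genuinely open. First, connectivity of the move graph is not something that ``follows since the compatibility conditions constrain only triples and pairs sharing indices''; it is a substantive result, and the paper simply cites it as Lemma~4.9 of \cite{Gilboa_2021} (Lemma~\ref{graph_connected} here). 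You should do the same rather than assert it.

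Second, and more seriously, the step you yourself flag as ``the hard part'' is the actual content of the triple-reversal case, and the route you sketch for it would not work as stated. Smoothness of $w$ and admissibility of $\mathbf{C}(w)$ play no role here, and there are no ``pentagonal'' length-3 Bruhat intervals to rule out: Bruhat order is graded, so every maximal chain in $[w',x]$ is automatically saturated. The real issue is showing that $w'T_{j,k}$ and $w'T_{j,k}T_{i,k}$ actually lie in $[w',x]$ at heights $1$ and $2$ --- a priori multiplying by $T_{j,k}$ could \emph{decrease} length. The paper closes this with Dyer's theorem on reflection subgroups \cite{Dyer}: the Bruhat order restricted to the coset $w'\langle T_{i,j},T_{j,k}\rangle$ is isomorphic to the Bruhat order on $S_3$; since $\ell(x)-\ell(w')=3$ equals the full rank of $S_3$, the element $w'$ must be the minimal coset representative and all six coset elements sit at the expected heights, so both hexagon chains are saturated. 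That is the missing ingredient, and it is purely Coxeter-theoretic, with no appeal to smoothness.
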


For an admissible set $A\subseteq \mathcal{C}^{2,3}$, define the graph $\mathcal{G}_A$ as follows. The vertices are the the compatible orders on $A_{\mathcal{T}}$ and there is an edge between two compatible orders $\prec_1$ and $\prec_2$ if and only if $\prec_2$ can be obtained from $\prec_1$ by one of the following \emph{elementary operations}:
\begin{enumerate}
    \item Swapping two commuting transpositions that are adjacent in $\prec_1$, and
    \item Reversing the order of consecutive $T_{i,j}, T_{i,k}, T_{j,k}$ (with $i < j < k$) in $\prec_1$ to $T_{j,k}, T_{i,k}, T_{i,j}$, or vice versa. 
\end{enumerate}
Note that the product of the reflections in the order $\prec_2$ is the same as the product in the order $\prec_1$.

\begin{lemma}[Lemma 4.9 of \cite{Gilboa_2021}]
\label{graph_connected}
For admissible $A\subseteq \mathcal{C}^{2,3}$, the graph $\mathcal{G}_A$ is connected. 
\end{lemma}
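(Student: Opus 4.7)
The plan is to induct on $|A_{\mathcal{T}}|$, using the wedge decomposition that drives Lemma \ref{Compatible_order_construction}. The base cases $|A_{\mathcal{T}}| \le 1$ are trivial. For the inductive step, by Remark 3.4 of \cite{Gilboa_2021} either $A$ or $A^{-1}$ admits a wedge; since reversing a compatible order on $A$ yields one on $A^{-1}$ (Observation 4.3 of \cite{Gilboa_2021}) and reversal carries elementary operations to elementary operations (the two orderings of a triple in operation (2) are exactly reverses of each other), I may assume $A$ itself has a wedge $T_{i,j}$. Writing $A^\circ$ for the admissible set from Lemma 3.10a of \cite{Gilboa_2021}, Observation \ref{Observation_swapping_i} gives $A_{\mathcal{T}} \setminus A_{\mathcal{T}}^\circ = \{T_{i,i+1},\ldots,T_{i,j}\}$.

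I will call a compatible order on $A_{\mathcal{T}}$ \emph{standard} if its last $j-i$ entries are $T_{i,j} \prec T_{i,j-1} \prec \cdots \prec T_{i,i+1}$, i.e., if it arises from Lemma \ref{Compatible_order_construction} applied to a compatible order on $A_{\mathcal{T}}^\circ$. It suffices to show that (a) every compatible order on $A_{\mathcal{T}}$ is connected in $\mathcal{G}_A$ to a standard one, and (b) any two standard orders are connected in $\mathcal{G}_A$. Part (b) follows immediately from the inductive hypothesis applied to the strictly smaller admissible set $A^\circ$: elementary operations on the prefix (a compatible order on $A_{\mathcal{T}}^\circ$) lift unchanged to elementary operations in $\mathcal{G}_A$, since the fixed suffix is never disturbed.

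The bulk of the work is (a). Starting from an arbitrary compatible order $\prec$, my plan is to iteratively transport $T_{i,i+1}$ to the final position, then $T_{i,i+2}$ to the penultimate one, and so on, via elementary operations. To move $T_{i,i+1}$ past its immediate successor $\tau$: if $\tau$ shares no index with $T_{i,i+1}$, operation (1) applies directly. Otherwise, the wedge condition $T_{i-1,i} \not\in A$ together with Observation \ref{Observation_swapping_i} forces $\tau \in \{T_{i,r}, T_{i+1,r}\}$ for some $r > i+1$, and an analysis of the triple $\{T_{i,i+1}, T_{i+1,r}, T_{i,r}\}$ through the compatibility axioms will show that $\tau$ can be slid past $T_{i,i+1}$ either outright (operation (1) after first moving a third element out of the way) or after a single application of operation (2). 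Once $T_{i,i+1}$ reaches the end, the preceding order is compatible with $A \setminus \{T_{i,i+1}\}$, for which $T_{i,i+2}$ plays the role of a wedge-like anchor on the remaining chain, and I would iterate until the whole standard suffix has been assembled.

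The main obstacle will be the case analysis in (a): verifying that every intermediate order encountered remains compatible and that the precise triples in $\mathcal{C}^{2,3}$ needed to legitimize each use of operation (2) are actually present in $A$. Here the wedge hypotheses $T_{i-1,i}, R_{i,j,j+1} \not\in A$ should be essential, since they eliminate the triples straddling index $i$ from the left or index $j$ from the right and reduce the potential obstructions to triples contained in $\{i,\ldots,j\}$ or disjoint from that block, both of which can be reordered using operations (1) and (2).
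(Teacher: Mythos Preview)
The paper does not actually prove this lemma: it is quoted verbatim as Lemma~4.9 of \cite{Gilboa_2021} and used as a black box, with no argument supplied here. So there is no ``paper's own proof'' to compare your proposal against.

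As for the proposal itself, the global architecture (induct on $|A_{\mathcal{T}}|$ via a wedge, reduce to ``standard'' orders, and use the inductive hypothesis on $A^\circ$ for part~(b)) is sound and is the natural strategy. The difficulty, as you correctly flag, is part~(a), and there your sketch stops short of a proof. Two concrete gaps: first, when $T_{i,i+1}$ is immediately followed by $T_{i,r}$ with $r>i+1$, compatibility forces $T_{i,i+1}\prec T_{i,r}\prec T_{i+1,r}$, so you cannot apply operation~(2) until $T_{i+1,r}$ has itself been brought adjacent to $T_{i,r}$; your phrase ``after first moving a third element out of the way'' hides a recursive subproblem whose termination you have not argued. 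Second, your iteration step (``the preceding order is compatible with $A\setminus\{T_{i,i+1}\}$, for which $T_{i,i+2}$ plays the role of a wedge-like anchor'') is not well-posed: $A\setminus\{T_{i,i+1}\}$ is not downward closed (since $T_{i,r}\ge T_{i,i+1}$ for all $r>i+1$) and hence not admissible, so you cannot literally re-apply the wedge machinery to it. You would need either a direct combinatorial argument that handles the whole suffix $T_{i,j},\ldots,T_{i,i+1}$ at once, or a more careful formulation of the intermediate structure that remains after peeling off $T_{i,i+1}$.
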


\begin{prop}
\label{operation_1_preserves_chain}
Let $w'\in S_n$, and suppose $T_{i,j}$ and $T_{k,l}$ commute. If 
\[ w' \to w'T_{i,j} \to w'T_{i,j} T_{k,l} \]
is a saturated chain in Bruhat order, then so is
\[ w'\to w'T_{k,l} \to w'T_{k,l}T_{i,j}.\]
\end{prop}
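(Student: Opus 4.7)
\medskip

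\noindent\textbf{Proof plan.} The plan is to verify both covering relations in the new chain directly using Lemma~\ref{Bjorner_bruhat_covering}. First, I would note that two commuting transpositions $T_{i,j}$ and $T_{k,l}$ in $S_n$ are either equal (making the statement trivial) or have disjoint support $\{i,j\}\cap\{k,l\}=\emptyset$; assume the latter. The key consequence is that $T_{i,j}$ and $T_{k,l}$, acting on the right, fix each other's positions: $(w'T_{i,j})(k)=w'(k)$, $(w'T_{i,j})(l)=w'(l)$, and symmetrically.

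Applying Lemma~\ref{Bjorner_bruhat_covering} to the two given covers yields (a) $w'(i)<w'(j)$ with $w'(c)\notin(w'(i),w'(j))$ for every $c\in(i,j)$, and (b) $w'(k)<w'(l)$ with $(w'T_{i,j})(c)\notin(w'(k),w'(l))$ for every $c\in(k,l)$. Condition (b) at $c=i$ (when $i\in(k,l)$) reads $w'(j)\notin(w'(k),w'(l))$, since $T_{i,j}$ swaps the values at $i$ and $j$; similarly at $c=j$ it reads $w'(i)\notin(w'(k),w'(l))$.

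To show $w'\to w'T_{k,l}$ is a Bruhat cover, I need to check that $w'(c)\notin(w'(k),w'(l))$ for all $c\in(k,l)$. For $c\notin\{i,j\}$, condition (b) gives this immediately since $T_{i,j}$ fixes position $c$. The nontrivial situations are $c=i$ with $i\in(k,l)$, or $c=j$ with $j\in(k,l)$. In each case, assume the offending value does lie in $(w'(k),w'(l))$ and perform a short case analysis on the relative order of the remaining indices among $k,l,i,j$. In the case $i\in(k,l)$ with $w'(i)\in(w'(k),w'(l))$: (a) forces $w'(j)\geq w'(l)$; if also $j\in(k,l)$ then (b) at $c=j$ contradicts $w'(i)\in(w'(k),w'(l))$, while if $j>l$ then $l\in(i,j)$ and (a) at $c=l$ combined with $w'(j)\geq w'(l)$ forces $w'(j)=w'(l)$, impossible. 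The case $j\in(k,l)$ with $w'(j)\in(w'(k),w'(l))$ is handled analogously, either contradicting (b) at $c=i$ when $i\in(k,l)$, or contradicting (a) at $c=k$ when $i<k$.

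Once $w'\to w'T_{k,l}$ is known to be a cover, the second step is automatic: $w'T_{k,l}T_{i,j}=w'T_{i,j}T_{k,l}$ has length $\ell(w')+2$ by hypothesis, and $\ell(w'T_{k,l})=\ell(w')+1$, so the length difference across $w'T_{k,l}\to w'T_{k,l}T_{i,j}$ is exactly one, which characterizes a Bruhat cover. The main obstacle is the combinatorial case analysis for the interleaved positions in the previous paragraph; it is elementary but requires simultaneously leveraging the ``no intermediate value'' conditions of both original covers to extract the contradictions.
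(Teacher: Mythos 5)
Your proof is correct, but it takes a genuinely different route from the paper. The paper's argument is structural: setting $x=w'T_{i,j}T_{k,l}$, it notes that the rank-two interval $[w',x]$ is a diamond, so there is a unique fourth element $y$, and then invokes Dyer's theorem \cite{Dyer} to conclude that $(w')^{-1}y$ must be a reflection in the reflection subgroup generated by $T_{i,j}$ and $T_{k,l}$; since that subgroup contains no reflections other than these two, $y=w'T_{k,l}$, and both covers of the new chain come for free. Your argument instead verifies the cover $w'\to w'T_{k,l}$ directly from the combinatorial covering criterion (Lemma~\ref{Bjorner_bruhat_covering}) via a case analysis on how $i,j$ interleave with $(k,l)$, and then gets the second cover from the length count $\ell(w'T_{k,l}T_{i,j})=\ell(w')+2$. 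I checked the case analysis and it closes up, though two attributions in your sketch are slightly off: in the case $k<i<l$ with $w'(i)\in(w'(k),w'(l))$, the bound $w'(j)>w'(l)$ needs condition (b) at $c=i$ in addition to (a), and in the case $i<k$ the contradiction with (a) at $c=k$ first requires deriving $w'(i)<w'(k)$ from (b) at $c=j$; as you note, both ``no intermediate value'' conditions must be used together. The trade-off is clear: your proof is elementary and self-contained, using only a lemma already quoted in the paper, but is specific to the symmetric group and requires bookkeeping; the paper's proof is shorter, conceptually cleaner, and works verbatim in any Coxeter group (which matters in light of the type $D$ conjectures of Section~\ref{Conjectures_D_n_Section}), at the cost of importing Dyer's results on reflection subgroups.
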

\begin{proof}
Let $x=w'T_{i,j} T_{k,l}=w'T_{k,l} T_{i,j}$. By assumption, $w' < x$ and $\ell(x)-\ell(w')=2$; since Bruhat intervals of rank two are diamonds \cite{Bjorner}, there is a unique element $y \in [w',x]$ not lying on the saturated chain $w' \to w'T_{i,j} \to w'T_{i,j} T_{k,l}$. By results of Dyer \cite{Dyer}, $(w')^{-1}y$ is a reflection in the reflection subgroup of $S_n$ generated by $T_{i,j}$ and $T_{k,l}$. These two elements are the only reflections in the subgroup they generate, so $y=w'T_{k,l}$. 
\end{proof}

\begin{prop}
\label{operation_2_preserves_chain}
Let $w'\in S_n$ and $1\le i<j<k\le n$. Then if
\[ w'\to w'T_{i,j} \to w'T_{i,j}T_{i,k} \to w'T_{i,j}T_{i,k}T_{j,k}\]
is a saturated chain in Bruhat order, then so is
\[ w'\to w'T_{j,k} \to w'T_{j,k}T_{i,k} \to w'T_{j,k}T_{i,k}T_{i,j}.\]
\end{prop}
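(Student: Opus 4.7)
The approach is to reduce both chains to the same permutation-pattern condition on $w'$ via Lemma~\ref{Bjorner_bruhat_covering}. Since right multiplication by $T_{a,b}$ swaps the values at positions $a$ and $b$, one can track how the values at positions $i$, $j$, $k$ permute after each step of either chain. Note also that $T_{i,j}T_{i,k}T_{j,k}=T_{i,k}=T_{j,k}T_{i,k}T_{i,j}$ in the $S_3$-subgroup on $\{i,j,k\}$, so both chains terminate at $w'T_{i,k}$.

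First I would unpack the hypothesis. Applying Lemma~\ref{Bjorner_bruhat_covering} to each of the three covers in the first chain, and using the position/value bookkeeping above, yields: (A) $w'(i)<w'(j)<w'(k)$; (B) no $p\in(i,j)$ has $w'(p)\in(w'(i),w'(j))$; (C) no $p\in(i,k)\setminus\{j\}$ has $w'(p)\in(w'(j),w'(k))$; and (D) no $p\in(j,k)$ has $w'(p)\in(w'(i),w'(j))$. A simple case split on whether $p\in(i,j)$ or $p\in(j,k)$ shows that (B), (C), (D) together are equivalent to the single symmetric condition that no $p\in(i,k)\setminus\{j\}$ has $w'(p)\in(w'(i),w'(k))$.

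Next I would apply the same analysis to the second chain. Tracking the values at positions $i$, $j$, $k$ after each successive right multiplication by $T_{j,k}$, then $T_{i,k}$, then $T_{i,j}$, the three covers yield conditions from Lemma~\ref{Bjorner_bruhat_covering} which, after combining, reduce to the same condition on $w'$: namely $w'(i)<w'(j)<w'(k)$ together with the requirement that no $p\in(i,k)\setminus\{j\}$ has $w'(p)\in(w'(i),w'(k))$. Therefore the first chain being saturated is in fact equivalent to the second being saturated, and the stated implication follows.

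The main obstacle is really just the bookkeeping of where the values $w'(i)$, $w'(j)$, $w'(k)$ sit after each right multiplication, and verifying that the intermediate-position conditions in each chain collapse to the same pattern-type condition. As an alternative to the direct computation, one could mirror the Dyer-flavored argument of Proposition~\ref{operation_1_preserves_chain}: the rank-$3$ interval $[w',w'T_{i,k}]$ in $S_n$ identifies via Dyer's results with the Bruhat interval $[e,T_{i,k}]$ in the reflection subgroup $\langle T_{i,j},T_{j,k}\rangle\cong S_3$, inside which both displayed chains correspond to the two reduced expressions of the longest element $T_{i,k}$. The direct verification, however, is cleaner and more self-contained.
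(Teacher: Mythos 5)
Your proof is correct, but it takes a genuinely different route from the paper's. The paper argues via Dyer's theory of reflection subgroups: since $T_{i,j},T_{j,k},T_{i,k}$ generate a copy of $S_3$, the rank-three interval $[w',x]$ (where $x=w'T_{i,k}$) is isomorphic to the full Bruhat order on $S_3$, and the claim reduces to the obvious $n=3$ case --- essentially the alternative you mention in your last paragraph, and the same mechanism as in Proposition~\ref{operation_1_preserves_chain}. You instead verify everything directly from Lemma~\ref{Bjorner_bruhat_covering}. I checked your bookkeeping and it is right: the three covers of the first chain unpack to $w'(i)<w'(j)<w'(k)$ together with your conditions (B), (C), (D), and since $w'(p)\neq w'(j)$ for $p\neq j$, the union of (B) and (D) with (C) is exactly the statement that no $p\in(i,k)\setminus\{j\}$ has $w'(p)\in(w'(i),w'(k))$; the second chain unpacks to (B$'$), (C$'$), (D$'$) with (C$'$) equal to (B)$\wedge$(D) and (B$'$)$\wedge$(D$'$) equal to (C), so the two saturation conditions coincide. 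Your argument is more elementary and self-contained (no appeal to \cite{Dyer}), and it actually yields the slightly stronger conclusion that the two chains are saturated \emph{if and only if} each other is; the paper's argument is shorter and, unlike the $S_n$-specific covering criterion, would transfer verbatim to other Coxeter groups, which matters for the conjectural generalizations in Section~\ref{Conjectures_D_n_Section}. If you write up the direct computation, do spell out the intermediate permutations $w'T_{i,j}$, $w'T_{i,j}T_{i,k}$, etc., at positions $i,j,k$, since the phrase ``after combining'' currently hides the only step where an error could creep in.
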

\begin{proof}
Let $x=w'T_{i,j}T_{i,k}T_{j,k}=w'T_{j,k}T_{i,k}T_{i,j}$. By assumption, $w' < x$ and $\ell(x)-\ell(w')=3$. The reflections $T_{i,j}, T_{j,k},$ and $T_{i,k}$ generate a reflection subgroup of $S_n$ isomorphic to $S_3$, and so by \cite{Dyer}, the interval $[w',x]$ is isomorphic to an interval in $S_3$. Since this interval has rank three, it must be isomorphic to the whole Bruhat order on $S_3$, with isomorphism mapping $w' \cdot u \mapsto \hat{u}$ where for a permutation $u$ of $\{i,j,k\}$, we write $\hat{u}$ for the corresponding permutation of $\{1,2,3\}$. Then, since the proposition clearly holds for $n=3$, we are done.
\end{proof}

By Propositions \ref{operation_1_preserves_chain} and \ref{operation_2_preserves_chain}, we conclude that the elementary operations preserve the saturated chain conditions from Theorem~\ref{Our_Main_Thm}. Combining this with Lemma \ref{graph_connected} proves Proposition \ref{theorem_all_compatible_orders}.

\begin{proof}[Proof of Theorem~\ref{thm:intro-main}]
Proposition~\ref{theorem_all_compatible_orders} gives the ``if" direction of Theorem~\ref{thm:intro-main}. For the ``only if" direction, notice that if $t_1 \prec \cdots \prec t_k$ is an ordering of $\mathbf{C}_\mathcal{T}(w)$ such that 
\[
e \to t_1 \to t_1t_2 \to \cdots \to t_1\cdots t_k=w
\]
is a saturated chain in Bruhat order, then we have
\[
|\mathbf{C}_{\mathcal{T}}(w)|=k=\ell(w),
\]
so $w$ is smooth by (\ref{eq:smooth-in-terms-of-length}).
\end{proof}

\section{Conjectures for Generalizations to Other Weyl Groups}
\label{Conjectures_D_n_Section}

We refer the reader to \cite{Humphreys} for background on root systems and Weyl groups. We use the convention that the Weyl group $W$ of type $D_n$ has roots, simple roots, and positive roots given respectively by:
\begin{align*}
    R &= \{e_j+e_i : j>i\} \cup \{e_j-e_i:i\not = j\} \cup \{-e_j-e_i:j>i\}, \\
    \Pi &= \{e_{i+1}-e_{i}:1\le i\le n-1\} \cup \{e_2+e_1\}, \\
    R^+ &= \{e_j-e_i:j>i\} \cup \{e_j+e_i:j>i\}. 
\end{align*}

\begin{definition}
Given a root system $R$ with positive roots $R^+$, the \emph{root poset} $(R^+, \leq)$ is defined as follows: for $\alpha, \beta \in R^+$ we have $\alpha$ covered by $\beta$ if and only if $\beta - \alpha \in \Pi$.
\end{definition}

\subsection{Conjectured Type $D$ analog of admissible sets}

The first object we attempt to generalize the notion of an admissible set. Define $\mathcal{C}^{2,3} = \{t_\alpha:\alpha \in R^+\}\cup \{t_\alpha t_\beta:\alpha,\beta,\alpha+\beta \in R^+\}$. 

\begin{definition}
Define the function $f:R^+\to \Pi$ by
\begin{align*}
    f(e_j-e_i) &= e_j-e_{j-1} \\
    f(e_j+e_i) &= \begin{cases} e_2+e_1 &\text{ if } (j,i)=(2,1), \\
    e_j-e_{j-1} &\text{ otherwise}. \end{cases}
\end{align*}
\end{definition}
 
\begin{lemma}
If $\alpha,\beta\in R^+$ and $\alpha+\beta \in R^+$, then $f(\alpha)\not = f(\beta)$. 
\end{lemma}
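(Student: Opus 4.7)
The key observation is that $f$ essentially records the largest index appearing in the support of the root: for any $\gamma \in R^+$ with $\gamma \neq e_2 + e_1$, one has $f(\gamma) = e_j - e_{j-1}$, where $j$ is the maximum index occurring in $\gamma$; the only exception is $\gamma = e_2 + e_1$ itself, which is sent to the simple root $e_2 + e_1$. Thus $f$ effectively groups positive roots by their top index, apart from peeling off the single root $e_2 + e_1$ and sending it to a distinguished simple root. My plan is to prove the lemma by contradiction, doing a short case analysis on the value of $f(\alpha) = f(\beta)$.

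First I would handle the case $f(\alpha) = f(\beta) = e_2 + e_1$. Since $e_2 + e_1$ is the unique positive root mapped to itself by $f$, both $\alpha$ and $\beta$ must equal $e_2 + e_1$, so $\alpha + \beta = 2(e_2 + e_1)$. This has a coefficient of $2$, hence cannot lie in $R$, contradicting $\alpha + \beta \in R^+$. Second, I would handle the case $f(\alpha) = f(\beta) = e_j - e_{j-1}$ for some $j \geq 2$. By the description above, both $\alpha$ and $\beta$ have top index $j$ and are distinct from $e_2 + e_1$, so each is of the form $e_j - e_i$ or $e_j + e_i$ for some $i < j$. In either shape the coefficient of $e_j$ is $+1$, hence $\alpha + \beta$ has $e_j$-coefficient equal to $2$. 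Since every root in type $D_n$ has coordinates in $\{-1, 0, +1\}$, this again contradicts $\alpha + \beta \in R^+$.

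I do not expect any serious obstacle here; the main point is just to correctly isolate the exceptional behavior of $e_2 + e_1$ from the generic ``top index'' behavior, and the contradiction in both cases comes for free from the fact that type $D$ roots have $\pm 1$ coordinates, so doubling any coefficient immediately destroys the possibility of being a root. The bulk of the write-up will therefore be verifying the ``top index'' reformulation of $f$ clearly enough to justify the two-case split.
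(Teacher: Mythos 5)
Your proof is correct and follows essentially the same route as the paper's: both arguments observe that $f(\alpha)=f(\beta)$ forces $\alpha$ and $\beta$ to share the same top index $j$ with coefficient $+1$, so $\alpha+\beta$ has a $2e_j$ term and cannot be a root. Your explicit case split for the exceptional root $e_2+e_1$ is harmless but unnecessary, since that case is subsumed by the same coefficient-of-$2$ contradiction.
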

\begin{proof}
Suppose $f(\alpha)=f(\beta)$. Then we must have $\alpha = e_j \pm e_i$ and $\beta = e_j \pm e_{i'}$ for some $j>i,i'$. Then $\alpha+\beta$ has a $2e_j$ as a term in it, so it cannot be a root.
\end{proof}

\begin{definition}[Conjectured generalization of admissible sets]
\label{generalization_admissible_sets}
A subset $A\subseteq \mathcal{C}^{2,3}$ is called \emph{admissible} if the following conditions hold:
\begin{itemize}
    \item $A$ is downward closed with respect to the Bruhat order. 
    
    \item Suppose $\alpha,\beta,\alpha',\beta'\in R^+$ satisfy
    \begin{itemize}
        \item $\alpha+\beta=\alpha'+\beta'=\gamma$ for some some $\gamma\in R^+$; and
        \item $f(\beta) \prec f(\alpha)$ and $f(\beta')\prec f(\alpha')$; and 
        \item $t_\alpha t_\beta \in A$ and $t_{\beta'}t_{\alpha'}\in A$. 
    \end{itemize}
    Then $t_{\alpha+\beta}\in A$. 

    \item Suppose $\alpha,\beta\in R^+$ satisfy $t_\alpha \in A$ and $t_\beta \in A$ and $\alpha+\beta \in R^+$. Then at least one of $t_\alpha t_\beta$ and $t_\beta t_\alpha$ is in $A$. 
\end{itemize}

\end{definition}

\begin{definition}[Conjectured generalization of compatible orders]
An ordering $\prec$ on the reflections of an admissible (as per Definition \ref{generalization_admissible_sets}) set $A\subseteq \mathcal{C}^{2,3}$ is called \emph{compatible} if the following condition holds. Whenever $\alpha,\beta\in R^+$ and $t_\alpha,t_\beta \in A$ with $\alpha+\beta\in R^+$, then:
\begin{itemize}
    \item If $t_{\alpha+\beta} \in A$, then either $t_\alpha \prec t_{\alpha+\beta} \prec t_\beta$ or $t_\beta  \prec t_{\alpha+\beta} \prec t_\alpha$. 
    \item If $t_{\alpha+\beta} \not \in A$, then $t_\alpha t_\beta \in A \iff t_\alpha \prec t_\beta$. 
\end{itemize}
\end{definition}

As for the symmetric group, we say an element $w \in W$ is \emph{smooth} if the Schubert variety $X_w$ is a smooth variety. These elements are again characterized, for example, by a notion of pattern avoidance, as elaborated in \cite{smooth_in_D_n}. Equipped with these definitions of admissible sets and compatible orders, we now state conjectures about generalizations of the results for smooth elements of $W$. 

\begin{conjecture}
\label{conj:type-D}
Let $w\in W$ be smooth and let $A=\mathbf{C}(w)$. Then:
\begin{enumerate}
    \item $A$ is admissible,
    \item a compatible order on $A_{\mathcal{T}}$ exists, and
    \item if we multiply the elements of $A_{\mathcal{T}}$ in any compatible order, we get $w$. 
\end{enumerate}
\end{conjecture}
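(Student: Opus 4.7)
The plan is to parallel the type-$A$ inductive framework of Gilboa--Lapid and Sections~\ref{Background Section}--\ref{Main_Theorem_Section}, with the $3412$/$4231$ pattern avoidance criterion replaced by the type-$D_n$ pattern avoidance characterization of smoothness from \cite{smooth_in_D_n}. Each of parts (1)--(3) of Conjecture~\ref{conj:type-D} mirrors one of the main type-$A$ results: (1) is the analog of Lemma 2.2 of \cite{Gilboa_2021}, (2) is the analog of Corollary~\ref{Exists_compatible_order_for_C_w}, and (3) is the analog of Theorem~\ref{Gilboa_Main_Thm}. We would prove them in this order, with (2) and (3) intertwined via an inductive wedge construction.

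For part (1), downward closure holds by the very definition of $\mathbf{C}(w)$. The remaining two admissibility conditions reduce to combinatorial statements about the signed permutation $w$. First we would establish a type-$D$ analog of the criterion (\ref{eq:reflection-less-bruhat}), giving an explicit characterization of when $t_\alpha \le w$ in terms of the signed one-line notation of $w$. Using this, the second admissibility condition and the pairing condition become finite case analyses over the possible sign combinations of $\alpha,\beta,\alpha+\beta$ (each being of the form $e_j \pm e_i$); each case is dispatched by showing that failure would force one of the forbidden type-$D$ patterns to appear in $w$.

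For parts (2) and (3), we would define a type-$D$ wedge as a reflection $t_\alpha \in A$ extremal in a suitable sense analogous to Definition~\ref{Wedge_definition}, prove a type-$D$ analog of Lemma~\ref{Lemma_3.8_Lapid} characterizing wedges for $\mathbf{C}(w)$ via fixed-point and descent conditions on the signed permutation $w$, and establish the existence of a wedge for $A$ or $A^{-1}$ (the analog of Remark 3.4 of \cite{Gilboa_2021}). Granted these, the inductive construction of Lemma~\ref{Compatible_order_construction} should carry over: remove the wedge-associated reflections from $A$ to obtain a smaller admissible set $A^\circ$, apply induction to get a compatible order on $A^\circ_\mathcal{T}$, and extend by appending the wedge reflections in an appropriate order. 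Part (3) then follows by tracking the signed permutation as the wedge block is multiplied in, mimicking Gilboa--Lapid's proof of Theorem~\ref{Gilboa_Main_Thm}.

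The main obstacle is the richer combinatorics of the type-$D$ root system: a positive root $e_j + e_i$ can be decomposed as a sum of two positive roots in multiple inequivalent ways (for example as $(e_j - e_k) + (e_k + e_i)$ for various $k$), so the second admissibility condition encompasses substantially more configurations than its type-$A$ counterpart, and the pairing condition requires a parallel case analysis. This also complicates the wedge induction, since removing a wedge $t_\alpha$ with $\alpha = e_j + e_i$ may not produce an admissible $A^\circ$ under the naive analog of the type-$A$ removal rule, and the type-$D$-specific simple root $e_2 + e_1$ will likely require a separate base-case wedge construction. Verifying that the three admissibility conditions survive wedge removal, and that the $f$-compatibility in the second condition is preserved by induction, is in our view the step most likely to require genuinely new ideas beyond a routine translation from type $A$.
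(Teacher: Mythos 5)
This statement is a \emph{conjecture} in the paper: the authors give no proof, and the only evidence they offer is a computational verification in type $D_4$. So there is no ``paper's own proof'' to match your argument against, and your proposal should be judged on its own terms as a purported proof. On those terms it has a genuine gap --- in fact it is not a proof at all, but a research plan. No step is actually carried out: you do not state or prove the type-$D$ analog of the criterion (\ref{eq:reflection-less-bruhat}), you do not define a type-$D$ wedge or prove one exists for $A$ or $A^{-1}$, you do not verify that wedge removal preserves admissibility (Definition~\ref{generalization_admissible_sets} has three conditions, and the second one, involving the function $f$ and two decompositions $\alpha+\beta=\alpha'+\beta'=\gamma$, has no type-$A$ counterpart in this form), and you do not perform the ``finite case analyses'' you invoke for part (1). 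Each of these is a substantive assertion that could fail; indeed you yourself flag the admissibility of $A^\circ$ after removing a wedge $t_{e_j+e_i}$ as ``most likely to require genuinely new ideas,'' which is an acknowledgment that the central inductive step is unproven.

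The specific obstruction you identify is real and is plausibly why the statement remains a conjecture: a positive root $e_j+e_i$ decomposes as a sum of two positive roots in many inequivalent ways, so the closure and pairing conditions, the wedge induction, and the connectivity-of-$\mathcal{G}_A$ analog (needed for the ``any compatible order'' clause in part (3)) all involve configurations with no type-$A$ precedent. Until those are handled --- or at least until the case analyses are written out and the wedge-removal lemma is proved --- this is an outline of how one might attack the conjecture, not a proof of it. If you want to make progress, the most valuable concrete first step would be to prove the type-$D$ analog of Lemma~\ref{Lemma_3.8_Lapid} and the existence of a wedge for $A$ or $A^{-1}$, since everything downstream in your plan depends on that.
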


Conjecture \ref{conj:type-D} has been verified for $W$ of type $D_4$.

\begin{remark}
Given a suitable definition of the function $f$, Conjecture~\ref{conj:type-D} could equivalently be made uniformly for all simply-laced finite types. In non-simply-laced type, complications arise because smoothness and \emph{rational smoothness} are no longer equivalent \cite{Carrell}.
\end{remark}

\bibliographystyle{plain}
\bibliography{revision}

\end{document}